\newtheorem{theorem}{Theorem}
\newtheorem{corollary}[theorem]{Corollary}
\newtheorem{lemma}[theorem]{Lemma}
\newtheorem{proposition}[theorem]{Proposition}
\theoremstyle{definition}
\newtheorem{definition}[theorem]{Definition}
\newtheorem{example}[theorem]{Example}
\DeclareMathOperator*{\aff}{aff}
\DeclareMathOperator*{\Argmin}{Arg\,min}
\DeclareMathOperator*{\conv}{co}
\DeclareMathOperator*{\lspan}{span}
\DeclareMathOperator*{\relint}{ri}
\newcommand{\R}{\mathbb{R}}
\newcommand{\N}{\mathcal{N}}
\renewcommand{\P}{\mathcal{P}}
\title{Uniqueness of solutions in multivariate Chebyshev approximation problems}
\author[V. Roshchina]{Vera Roshchina}
\address[V. Roshchina]{UNSW Sydney}
\email{v.roshchina@unsw.edu.au}
\author[N. Sukhorukova]{Nadia Sukhorukova}
\address[N. Sukhorukova]{Swinburne University of Technology}
\email{nsukhorukova@swin.edu.au}
\author[J. Ugon]{Julien Ugon}
\address[J. Ugon]{Deakin University}
\email{julien.ugon@deakin.edu.au}
\subjclass[2010]{Primary: 41A10
, 41A52
, 41A63
, 49K30
, 49K35
}
\keywords{Chebyshev approximation, uniqueness of solutions, multivariate polynomial approximation}
\begin{document}

\begin{abstract} We study the solution set to multivariate Chebyshev approximation problem, focussing on the ill-posed case when the uniqueness of solutions can not be established via strict polynomial separation. We obtain an upper bound on the dimension of the solution set and show that nonuniqueness is generic for the ill-posed problems on discrete domains. Moreover, given a prescribed set of points of minimal and maximal deviation we construct a function for which the dimension of the set of best approximating polynomials is maximal for any choice of domain. We also present several examples that illustrate the aforementioned phenomena,  demonstrate practical application of our results and propose a number of open questions.
\end{abstract}

\maketitle

\section{Introduction}

The classical Chebyshev approximation problem is to construct a polynomial of a given degree that has the smallest possible absolute deviation from some continuous function on a given interval.
 For univariate polynomials of degree $d\geq 0$ the solution is unique and satisfies an elegant alternation condition: there exist $d+2$ points of alternating minimal and maximal deviation of the function from approximating polynomial \cite{Cheb} (see Fig.~\ref{fig:alternation}).
\begin{figure}[ht]{\centering
\includegraphics[width=0.45\textwidth]{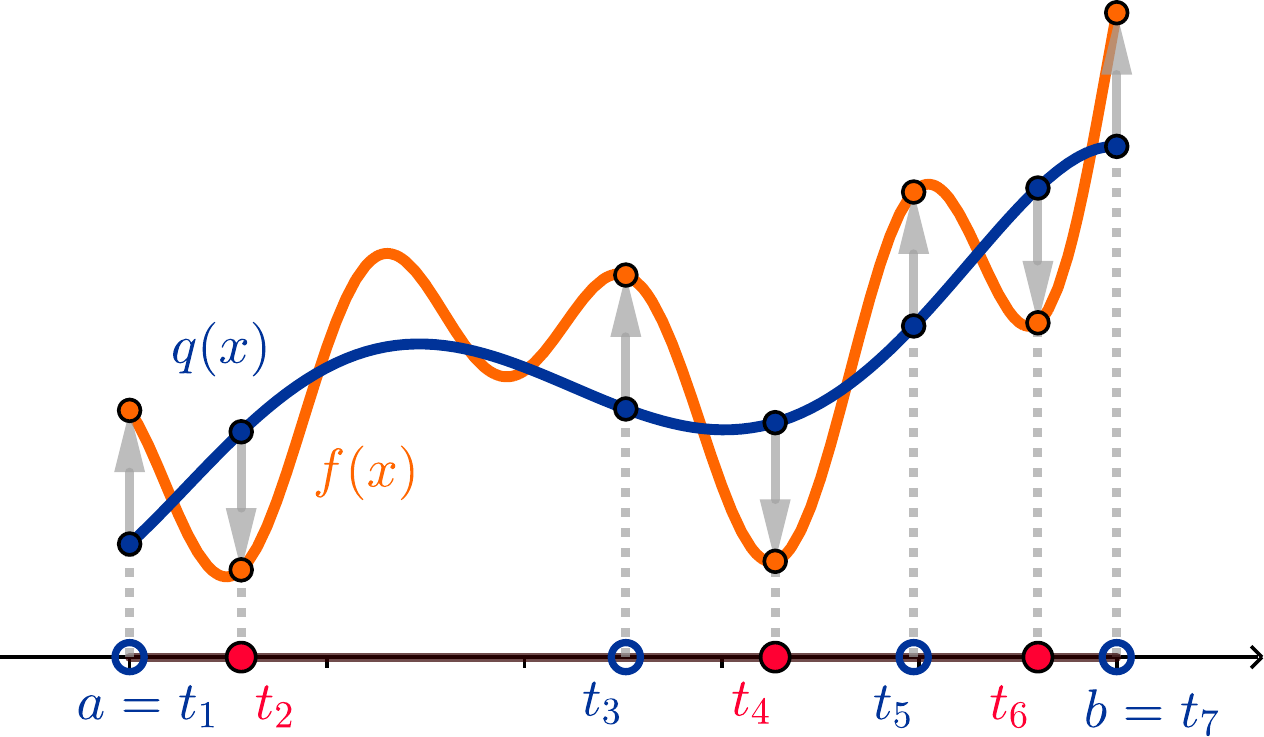}\\}
\caption{A typical distribution of the points of minimal and maximal deviation of  a continuous function ($f$, shown in blue) from its best Chebyshev approximation by a polynomial of degree at most 5 (denoted by $q$, shown in orange) on a bounded interval $[a,b]$.}
\label{fig:alternation}
\end{figure}

Once we depart from the classical case and consider approximating a continuous function on a compact subset $X$ of $\R^n$ by multivariate polynomials, the uniqueness is lost: the result of Mairhuber \cite{Mairhuber} demonstrates that a multivariate Chebyshev approximation problem has a unique solution generically (for all continuous functions on a given compact subset of $\R^n$) if and only if the underlying set $X$ is homeomorphic to a closed subset of a circle. 
In particular, 
if $X\subset\R^n$ contains an interior point, then there is no Haar space of dimension $n \geq 2$ for $X$. 
 An example of such nonunique approximation is shown in  Fig.~\ref{fig:nonunique}.
\begin{figure}[ht]{\centering
\includegraphics[width=0.45\textwidth]{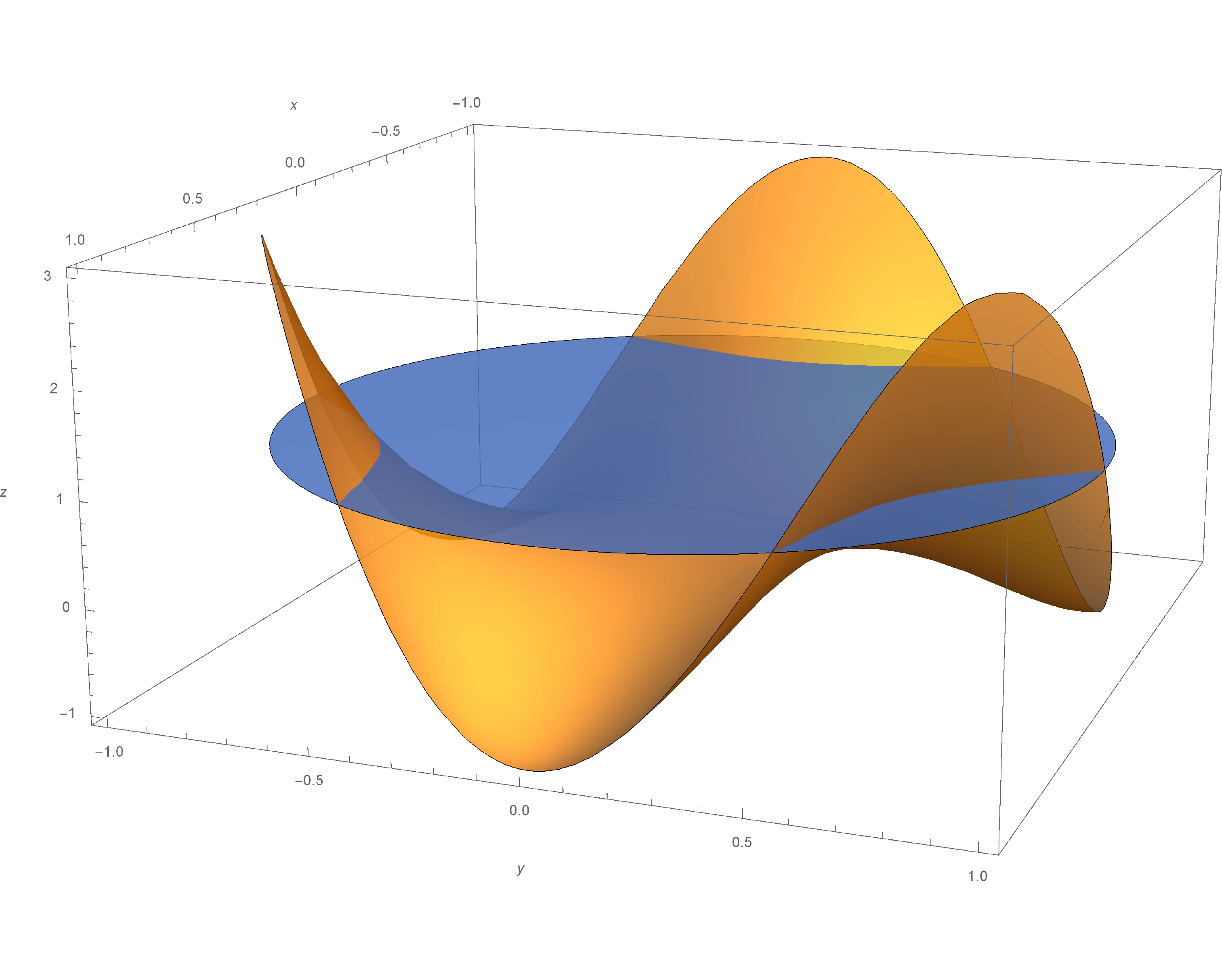}\qquad
\includegraphics[width=0.45\textwidth]{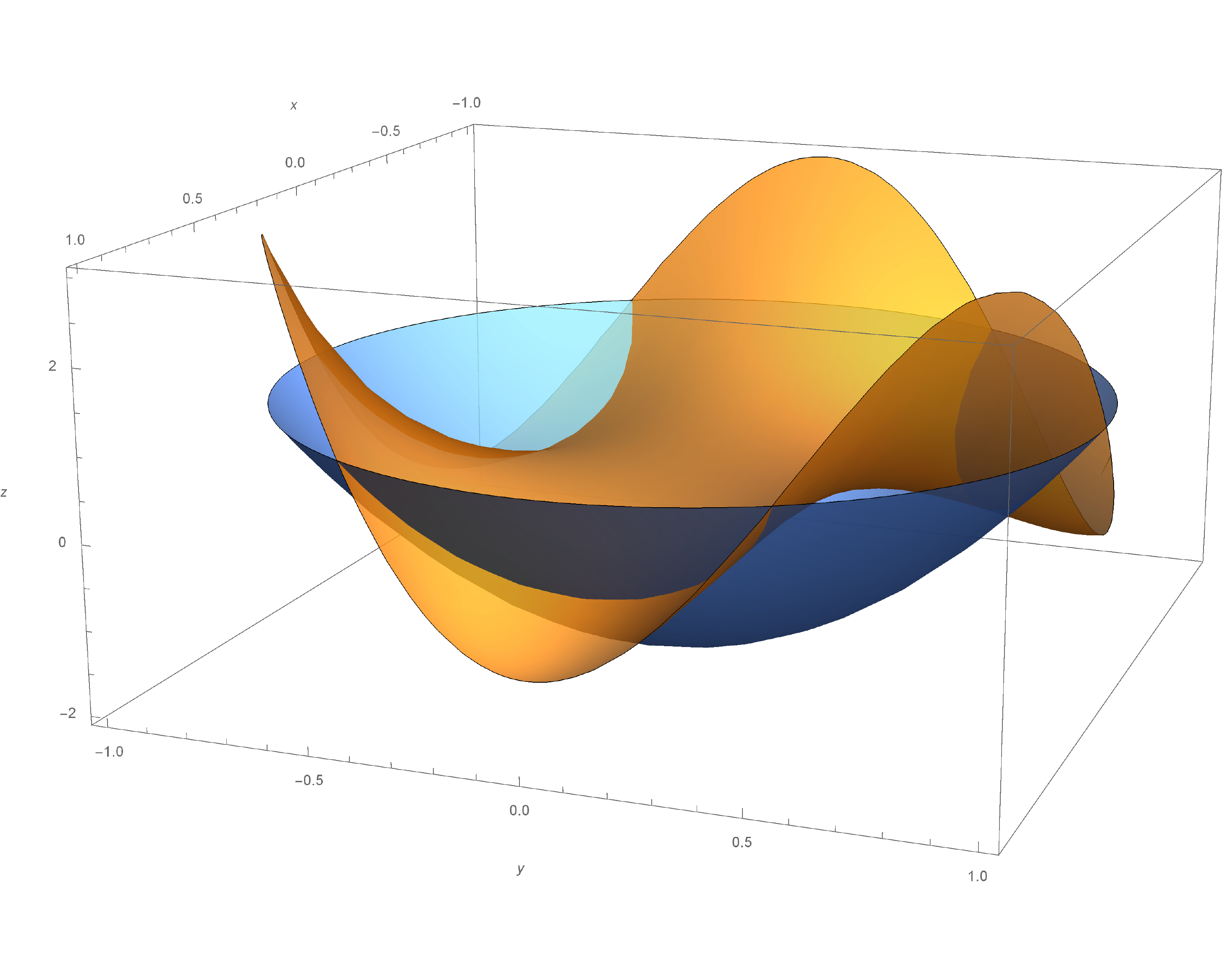}\\}
\caption{The function $f(x,y) =  x^6 +y^6+  3 x^4 y^2 + 3 x^2 y^4 + 6 x y^2 - 2 x^3$ has several best quadratic approximations on the disk $x^2+y^2\leq 1$. The plot of the function in orange colour is shown together with two different best approximations in blue: $q_0(x,y) = 1$ (on the left) and $q_1(x,y) =  3 x^2 + 3 y^2-2$ (on the right). }
\label{fig:nonunique}
\end{figure}

Even though the uniqueness of solutions is lost in the multivariate case, the alternation result holds in the form of algebraic separation. It was first shown in \cite{Rice} that a polynomial approximation of degree $d$ is optimal if the sets of points of minimal and maximal deviation can not be separated by a polynomial of degree at most $d$. This result can be reproduced using the standard tools of modern convex analysis, as demonstrated in \cite{ModernAlt}. Another approach to generalise the notion of alternation to multivariate problems is based on the alternating signs of certain determinants~\cite{DemyanovMalozemov}.

The classical alternation result was obtained by Chebyshev in 1854 \cite{Cheb}, but little is known about the shape of the solutions of a more general multivariate problem.  In particular, related work \cite{IntCheb} that studies a version of this problem for polynomials with integer coefficients, mentions that the multivariate problem is `virtually untouched'. Even though the solutions to the multivariate problem satisfy a form of an alternation condition, the structure of the solutions and the location of points of maximal and minimal deviation are more complex compared to the univariate case, which results in many interesting challenges.

From the point of view of classical approximation theory multivariate polynomial approximation is relatively inefficient: for a range of key applications some other approaches such as the radial basis functions \cite{radialbasisbook} provide superior results. However modern optimisation is increasingly fusing with computational algebraic geometry, successfully tackling problems that were insurmountable in the past, and polynomial approximation emerges in this context as valuable not only for solving computationally challenging problems, but also as an analytic tool that together with Gr\"obner basis methods may lead to algorithmic solutions for finding extrema in nonconvex problems. Another potential application is a generalisation of trust-region methods, where instead of local quadratic approximations to the function locally more versatile higher order polynomial approximations may be used. 

Consider the space $\mathbb{P}_d(\R^n)$ of real polynomials in $n$ variables of degree at most $d$. Let $f:X\to \R$ be a continuous function defined on a compact set $X\subset \R^n$. A polynomial $q^*\in \mathbb{P}_d(\R^n)$ solves the multivariate Chebyshev approximation problem for $f$ on $X$ if 
$$
\max_{x\in X}|f(x) - q^*(x)|\leq \max_{x\in X} |f(x)-q(x)|\quad \forall q\in \mathbb{P}_d(\R^n).
$$
We are interested in the set $Q\subset\mathbb{P}_d(\R^n)$ of all such solutions. In some special cases the solution to the multivariate Chebyshev approximation problem is known explicitly. For instance, the best approximation by monomials on a unit cube is obtained from the products of classical Chebyshev polynomials (see \cite{ThiranDetaille} and a more recent overview  \cite{xuyuan}); this is related to another generalisation of Chebyshev's results, when the problem of a best approximation of zero with polynomials having a fixed highest degree coefficient is considered: in some special cases, solutions on the unit cube are known from \cite{sloss};  solutions for the unit ball were obtained in \cite{Reimer}. 

There is a different approach to generalising Chebyshev polynomials, based on extending the relation 
$T_k(\cos x) = \cos k x$ to the multivariate case. In \cite{MuntheKaasChapter,MuntheKaasFoCM} more general functions $h:\R^n\to \R^n$ periodic with respect to fundamental domains of affine Weyl groups are considered, and the aforementioned relation  is replaced by $P_k(h(x)) = h(kx)$. Such generalised Chebyshev polynomials are in fact systems of polynomials, as $P_k:\R^n\to \R^n$. We note here that the aforementioned work, as well as other approximation techniques based on Chebyshev polynomials (common in numerical PDEs), use nodal interpolation with Chebyshev polynomials. This is a conceptually different framework compared to our optimisation setting; in particular, this approach requires a careful choice of interpolation nodes on the domain to ensure the quality of approximation.   


For the univariate problem the optimal solutions to the Chebyshev approximation problem can be obtained using numerical techniques that fit in the context of linear programming and the simplex method, and exchange algorithm pioneered by Remez \cite{remez} is perhaps the most well-known technique. Even though the multivariate problem can be solved approximately by linear programming, the problem rapidly becomes intractable with the increase in the degree and number of variables, and hence there is much need for more efficient methods. This is another exciting research direction, as the rich structure of the problem is likely to yield specialised methods which surpass the performance of direct linear programming discretisation. The general framework for the potential generalisation of the exchange approach was laid out in \cite{VP}, however several implementation issues need to be resolved for a practically viable version of the method.

For any polynomial $q$ we can define the sets of points of minimal and maximal deviation, i.e. such $x\in X$ for which  the values $q(x)-f(x)$ and $f(x)-q(x)$ respectively coincide with the maximum $\max_{x'\in X}|f(x')-q(x')|$. These sets may be different for different polynomials in the optimal set $Q$. We show  that it is possible to identify an intrinsic pair of such subsets pertaining to all polynomials in $Q$ (see Theorem~\ref{thm:minimalset}); moreover the location of these points determines the maximal possible  dimension of the solution set (see Lemma~\ref{lem:dim}). We also show that for any prescribed arrangement of points of minimal and maximal deviation and any choice of the maximal degree there exists a continuous function and a relevant approximating polynomial for which these points are precisely the points of minimal and maximal deviation; moreover, the set of all best approximations has the largest possible dimension, for any choice of domain $X$ (Lemma~\ref{lem:sharpbump}). Finally, we show that the set of best Chebyshev approximations is always of the maximal possible dimension if the domain $X$ is finite (Lemma~\ref{lem:finite}).

We begin with some preliminaries and examples in Section~\ref{sec:prelim}, focussing on the well-known separation characterisation of optimality and Mairhuber's uniqueness result. In Section~\ref{sec:structure} we present our new results. We then summarise our findings and present some open problems in Section~\ref{sec:concl}.

\section{Preliminaries and Examples}\label{sec:prelim}

\subsection{Multivariate polynomials}

A multivariate polynomial of degree $d$ with real coefficients can be represented as 
\[
q(x) = \sum_{|\alpha|\leq d} a_\alpha x^\alpha,
\]
where  $\alpha = (\alpha_1,\dots, \alpha_n)$ is an $n$-tuple of nonnegative integers, $x^d = x_1^{\alpha_1} x_2^{\alpha_2}\cdots x_n^{\alpha_n}$, $|\alpha| = |\alpha_1|+|\alpha_2|+\cdots+ |\alpha_n|$, and $a_\alpha\in \R$ are the coefficients. 
All polynomials of degree not exceeding $d$ constitute a vector space $\mathbb{P}_d(\R^n)= \lspan \{x^\alpha\,|\, |\alpha|\leq d\}$
of dimension ${{n+d} \choose {d}}$.

Note that, generally speaking, we can consider any finite set of (linearly independent) polynomials in $n$ variables, $G = (g_1,\dots, g_N)$ and instead of the space $\mathbb{P}_d(\R^n)$ consider the linear span $V$ of $G$, i.e.
\begin{equation}\label{eq:V}
V = \lspan \{g_i\,|\, i \in \{1,\dots, N\}\}.
\end{equation}
Then the solution set $Q\subseteq V$ to the Chebyshev approximation problem for a given continuous function $f$ defined on a compact set $X\subseteq \R^n$ is 
\begin{equation}\label{eq:p}
Q: = \Argmin_{q\in V}\|f-q\|_\infty,
\end{equation}
where
$$
\|f-q\|_{\infty} = \max_{x\in X}|f(x) - q(x)|.
$$

Fixing a continuous function $f:X\to \R$, for every polynomial $q\in V$ we define the sets of points of minimal and maximal deviation explicitly as
\begin{align}\label{eq:defNP}
\N(q)& := \{x\in X\,|\,  q(x)-f(x) = \|f-q\|_\infty\},\notag\\
\P(q)& := \{x\in X\,|\,  f(x)-q(x) = \|f-q\|_\infty\}.
\end{align}
Observe that for any given polynomial $q$ at least one of these sets is nonempty, and for any $q^*\in Q$ both of them are nonempty (otherwise one can add an appropriate small constant to $q^*$ and decrease the value of the maximal absolute deviation). Also observe that the sets $\N(q)$ and $\P(q)$ are disjoint unless $q\equiv f$ on $X$ (in this case $\N(q) = \P(q) = X$).

The minimisation problem of \eqref{eq:p} is an unconstrained convex optimisation problem: the objective function $\|f-q\|_\infty$ can be interpreted as the maximum over two families of linear functions parametrised by the domain variable $x\in X$, i.e.
\begin{equation}\label{eq:max}
\|f-q\|_\infty  = \max_{x\in X}|f(x)-q(x)|= \max_{\substack{x\in X\\ s}\in \{-1,1\}}s(f(x)-q(x)).
\end{equation}
The solution set $Q$ is nonempty, since it represents the metric projection of $f$ onto a finite-dimensional linear subspace $V$ of the normed linear space of functions bounded on $X$. It is also easy to see from the continuity of $f$ that this set is closed. Moreover, since a maximum function over a family of linear functions is convex, $Q$ is convex (e.g. see \cite[Proposition~{2.1.2}]{JBHU}).

\begin{example}[Solution set is unbounded] We consider a degenerate case of the problem: find the best linear approximation to $f(x,y) = x^2$ on $X=[-1,1]\times\{0\}$. Since the domain is effectively restricted to the line segment $[-1,1]$, the solution reduces to the classical univariate case: there is a unique best approximation, which happens to be constant, $\frac{1}{2}$. Observe however that in the true two-dimensional setting any linear polynomial of the form $q(x,y) = \frac{1}{2}+\alpha y$ is also a best approximation of $f$ on $X$. This means that the solution set of best approximations is unbounded, $Q = \{\frac{1}{2}+ \alpha y, \, \alpha \in \R\}$, even though all such optimal solutions coincide on $X$, and effectively---on the set $X$---provide the same unique best approximation. 
\end{example}

\subsection{Optimality conditions}
\begin{definition}
We say that a polynomial $p\in V$ separates two sets $N,P\subset \R^n$ if 
\begin{equation}\label{eq:separation}
p(x)\cdot p(y) \leq 0 \quad \forall x\in N, y\in P;
\end{equation}
we say that the separation is \emph{strict} if the inequality in \eqref{eq:separation} is strict, i.e.
\begin{equation}\label{eq:strictseparation}
p(x)\cdot p(y) < 0 \quad \forall x\in N, y\in P.
\end{equation}
\end{definition}
Recall the well-known characterisations of optimality (see \cite{Rice} and \cite{ModernAlt} for modern proofs).

\begin{theorem}\label{thm:classicsep} Let $X$ be a compact subset of $\R^n$, and assume that $f: X\to \R$ is a continuous function. A polynomial $q\in V$ is an optimal solution to the Chebyshev approximation problem \eqref{eq:p} if and only if there exists no  $p\in V$ that strictly separates the sets $\N(q)$ and $\P(q)$.
\end{theorem}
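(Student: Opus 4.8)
The plan is to prove the two implications separately, arguing each by contraposition. Throughout I write $r:=\|f-q\|_\infty$, and first record the reformulation of strict separation that makes the argument transparent. When both $\N(q)$ and $\P(q)$ are nonempty — which, as observed above, is automatic once $q\in Q$ — a polynomial $p$ satisfies \eqref{eq:strictseparation} precisely when it has a constant sign on each set and opposite signs across them; after replacing $p$ by $-p$ if necessary, I may assume $p>0$ on $\N(q)$ and $p<0$ on $\P(q)$. Indeed, fixing $x_0\in\N(q)$, the inequalities $p(x_0)p(y)<0$ force $p(x_0)\neq0$ and a fixed sign of $p$ on $\P(q)$, and symmetrically on $\N(q)$.

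For the implication that a non-optimal $q$ admits a strict separator (the contrapositive of sufficiency of the no-separation condition), I would take $q'\in V$ with $\|f-q'\|_\infty<r$ and set $p:=q-q'\in V$. Writing $p=(q-f)+(f-q')$ and using $-r<f-q'<r$ pointwise, one checks that on $\N(q)$, where $q-f=r$, we get $p>r-r=0$, while on $\P(q)$, where $q-f=-r$, we get $p<-r+r=0$. Thus $p>0$ on $\N(q)$ and $p<0$ on $\P(q)$, so $p$ strictly separates the sets from \eqref{eq:defNP}. This step is purely algebraic and uses no compactness.

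The reverse implication — that optimality of $q$ precludes a strict separator — is where the real work lies, and I would argue it directly by contradiction. Assume $q\in Q$, so both extremal sets are nonempty, and suppose $p$ strictly separates them, normalised so that $p>0$ on $\N(q)$ and $p<0$ on $\P(q)$. The natural candidate for a strictly better approximant is $q_\epsilon:=q-\epsilon p$ for small $\epsilon>0$: on the extremal sets the signs of $p$ are exactly right to pull $f-q_\epsilon=(f-q)+\epsilon p$ strictly inside $(-r,r)$. The obstacle, and the heart of the proof, is to show that the improvement persists off $\N(q)\cup\P(q)$, and this is precisely where compactness enters. To control the upper estimate $f-q_\epsilon<r$, I would consider the compact set $W:=\{x\in X:\ p(x)\geq0\}$; since $f-q=r$ only on $\P(q)$, which is disjoint from $W$, continuity together with compactness of $W$ yields $\max_{W}(f-q)\leq r-c$ for some $c>0$, whence $f-q+\epsilon p\leq r-c+\epsilon\max_X|p|<r$ on $W$ once $\epsilon$ is small, while on $X\setminus W$ we have $p<0$ and $f-q\leq r$, giving $f-q+\epsilon p<r$ outright. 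A symmetric argument on $\{x\in X:\ p(x)\leq0\}$ produces the lower estimate $f-q_\epsilon>-r$. Choosing $\epsilon$ below both thresholds gives $\|f-q_\epsilon\|_\infty<r$, contradicting $q\in Q$. The one delicate point is the existence of the uniform gap $c>0$, which rests on $X$ being compact, $f$ continuous, and the strictness of the separation keeping $\P(q)$ off $W$; without strictness the gap could collapse to zero and the perturbation need not improve the approximation.
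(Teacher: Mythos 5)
Your proof is correct, and it is worth noting at the outset that the paper does not actually prove Theorem~\ref{thm:classicsep}: it recalls it as a known result, citing \cite{Rice} for the original argument and \cite{ModernAlt} for a derivation by standard tools of modern convex analysis, so there is no in-paper proof to match against. Your blind argument is the classical first-principles one, and both halves check out. The easy direction, taking a strictly better approximant $q'$ and observing that $p=q-q'=(q-f)+(f-q')$ is forced to be strictly positive on $\N(q)$ and strictly negative on $\P(q)$, is exactly the standard algebraic step. The hard direction, perturbing to $q_\epsilon=q-\epsilon p$ and extracting a uniform gap $\max_{W}(f-q)\leq r-c$ on $W=\{x\in X:\ p(x)\geq 0\}$ from compactness of $W$ and the disjointness $\P(q)\cap W=\emptyset$ guaranteed by strictness, is the correct mechanism, and your passage from the pointwise strict inequality on $X\setminus W$ to $\|f-q_\epsilon\|_\infty<r$ is legitimate precisely because the norm in \eqref{eq:max} is a maximum attained on the compact set $X$: the maximiser lies either in $W$ (covered by the gap $c$) or in $X\setminus W$ (covered by $\epsilon p<0$ there), and a sentence making this attainment explicit would tighten the write-up. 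Two degenerate cases deserve a word for completeness, though neither breaks anything: if $q$ is not optimal and one of $\N(q),\P(q)$ is empty, any $p\in V$ separates vacuously, so your first direction is trivially true there; and if $f\equiv q$ on $X$ then $\N(q)=\P(q)=X$ and no strict separator can exist (take $x=y$ in \eqref{eq:strictseparation}), so the contradiction hypothesis in your second direction is vacuous. Compared with the convex-analytic route the paper points to, which obtains the criterion from optimality conditions for max-type convex functions of the kind underlying Proposition~\ref{prop:linmax}, your proof is more elementary and self-contained, at the cost of not exposing the active-index structure that the paper later exploits in Theorem~\ref{thm:minimalset}.
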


\begin{example}[Best quadratic approximation is not unique]\label{eg:nonunique} We focus on the function  $f(x,y) =  x^6 +y^6+  3 x^4 y^2 + 3 x^2 y^4 + 6 x y^2 - 2 x^3$ discussed in the Introduction and demonstrate that it does indeed have multiple best quadratic approximations on the disk $x^2+y^2\leq 1$ (see Fig.~\ref{fig:nonunique}).

For two different polynomials $q_0(x,y) = 1$ and $q_1(x,y) =  3 x^2 + 3 y^2-2$ the points of maximal negative and positive deviation of $f$ from these polynomials are
\[
\N(q_0) = \{z_1,z_3,z_5\}, \quad \N(q_1) = \N(q_0)\cup\{z_0\}, \quad \P(q_0) = \P(q_1) = \{z_2,z_4,z_6\}, 
\]
where 
\[
z_0 = (0,0),\; z_1 =  (1,0),\; z_2 = \left(\frac{1}{2},\frac{\sqrt{3}}{2}\right),\;  z_3 = \left(-\frac{1}{2},\frac{\sqrt{3}}{2}\right),
\]
\begin{equation}\label{eq:defz}
 z_4 = (-1,0), \; z_5 =  \left(-\frac{1}{2},-\frac{\sqrt{3}}{2}\right),\; z_6 = \left(\frac{1}{2},-\frac{\sqrt{3}}{2}\right).
\end{equation}
This is not difficult to verify using standard calculus techniques (see appendix).

\subsection{Location of maximal and minimal deviation points}

Observe that the points $z_1,z_2,\dots, z_6$ lie on the unit circle. By the B\'ezout theorem, this circle can have at most 4 intersections with any other quadratic curve. However if we could find  a quadratic polynomial that strictly separates the points of maximal and minimal deviation, the relevant curve would intersect the circle in at most six points, as shown in Fig.~\ref{fig:circle-curve}.
\begin{figure}[ht]{\centering
\includegraphics[width=0.3\textwidth]{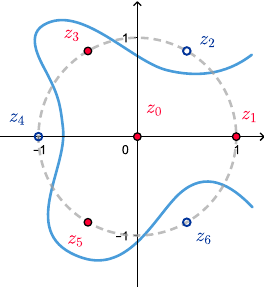}\qquad
\includegraphics[width=0.3\textwidth]{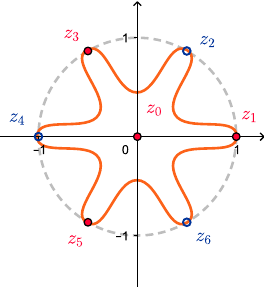}\\}
\caption{On the left: the intersection of two quadratic curves at six points contradicts the B\'ezout theorem; on the right: a subset of the unit disk homeomorphic to a circle.}
\label{fig:circle-curve}
\end{figure} Hence such separation is impossible, so both $q_0$ and $q_1$ are optimal. 
\end{example}

We conclude this section with the well-known result of Mairhuber \cite{Mairhuber} (generalised to compact Hausdorff spaces by Brown~\cite{brown}).

\begin{theorem}[Mairhuber] 
A compact subset $X$ of $\R^n$ containing at least $k \geq 2$ points may serve as the domain of definition of a set of real continuous functions $f_1(x),\dots,f_k(x)$ that provide a unique Chebyshev approximation to any continuous function $f$ on the set $X$, if and only if $X$ is homeomorphic to a closed subset of the circumference of a circle.
\end{theorem}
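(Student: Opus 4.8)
The plan is to prove the equivalence by passing through the algebraic reformulation of uniqueness in terms of Haar systems (Chebyshev systems), and then to treat the two implications separately. First I would observe that the property ``$f_1,\dots,f_k$ provide a unique best approximation to \emph{every} continuous $f$'' is equivalent to $V=\lspan\{f_1,\dots,f_k\}$ being a Haar subspace of the continuous functions on $X$, i.e.\ to the condition that the determinant
\[
D(x_1,\dots,x_k)=\det\bigl[f_i(x_j)\bigr]_{i,j=1}^{k}
\]
is nonzero for every choice of $k$ distinct points $x_1,\dots,x_k\in X$. One direction of this reduction is classical Haar theory; for the other the separation characterisation of Theorem~\ref{thm:classicsep} is convenient, since a nontrivial element of $V$ vanishing at $k$ points is precisely what is needed to build a strict separator of the deviation sets and hence a second optimal solution. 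After this reduction both sides of the asserted equivalence concern the existence of a $k$-dimensional Haar system on $X$.

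For the necessity direction (a Haar system forces $X$ into the circle) I would exploit the sign behaviour of $D$. On the ordered configuration space $\widetilde{X}^{\,k}=\{(x_1,\dots,x_k)\in X^k:\ x_i\neq x_j\ \text{for}\ i\neq j\}$ the function $D$ is continuous and never vanishes, so it keeps a constant sign on each connected component; and transposing two of its arguments reverses that sign, since $D$ is an alternating function of the points. The key point is therefore purely topological: if one can find a path inside $\widetilde{X}^{\,k}$ joining $(x_1,x_2,x_3,\dots,x_k)$ to $(x_2,x_1,x_3,\dots,x_k)$ — that is, continuously exchange two points while avoiding all collisions — then $D$ would be simultaneously of constant sign and of reversed sign, a contradiction. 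Thus a Haar system can exist only on those $X$ in which no two points may be swapped in this way. This also explains the parity phenomenon on the full circle: with no other points to obstruct it, a pair can be rotated past one another, so only odd-dimensional systems survive there.

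The hard part will be the topology: showing that the impossibility of such a swap is exactly the condition that $X$ embed as a closed subset of $S^1$. Here I would argue contrapositively. If $X$ is not homeomorphic to a subset of the circle, then (reducing to the case of a Peano continuum and applying the classical no-triod characterisation of sets embeddable in $S^1$) the set $X$ must contain either a triod, three arcs meeting at a single point, or a two-dimensional cell; in either configuration two interior points can be braided around one another while the remaining $k-2$ points stay fixed, producing the forbidden path. Conversely, on a closed subset of the circle the cyclic order rigidly constrains motions: two points cannot pass each other without colliding with one of the fixed points, so no such swap exists. Making the braiding construction rigorous for an arbitrary compact $X$, and invoking the correct planar-continuum classification, is where I expect the real work to lie.

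For the sufficiency direction I would simply exhibit Haar systems. If $X$ is a proper closed subset of the circle, it is homeomorphic to a closed subset of an arc and hence, via some embedding $\varphi:X\hookrightarrow\R$, to a closed subset of the line; then the functions $f_i=\varphi^{\,i-1}$ for $i=1,\dots,k$ form a Haar system, because a nonzero real polynomial of degree at most $k-1$ has fewer than $k$ roots, so $D$ reduces to a nonvanishing Vandermonde-type determinant. If $X$ is the whole circle, the trigonometric polynomials $1,\cos\theta,\sin\theta,\dots,\cos m\theta,\sin m\theta$ furnish a Haar system of odd dimension $k=2m+1$, consistent with the parity obstruction above. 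Combining the two implications yields the stated equivalence.
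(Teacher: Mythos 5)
The paper does not prove this statement: it quotes Mairhuber's theorem verbatim from \cite{Mairhuber} (with Brown's Hausdorff-space extension cited as \cite{brown}) and uses it as a known result, so there is no in-paper proof to compare against. Your proposal must therefore be measured against the classical Mairhuber--Curtis--Sieklucki argument, whose skeleton you have correctly reconstructed: the reduction of ``unique best approximation for every continuous $f$'' to the Haar condition (Haar's unicity theorem, with the converse direction buildable from the separation criterion of Theorem~\ref{thm:classicsep}), the alternating, nonvanishing determinant $D$ on the collision-free configuration space, the swap obstruction, and the Vandermonde construction $f_i=\varphi^{i-1}$ for proper closed subsets of the circle. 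Two steps, however, contain genuine gaps.

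First, in the necessity direction your dichotomy ``triod or two-dimensional cell'' is false for general compact $X$, and the ``reduction to the case of a Peano continuum'' is neither justified nor available: $X$ need not be connected or locally connected. Two disjoint circles, or a circle together with one isolated point, contain neither a triod nor a $2$-cell, yet embed in no closed subset of $S^1$; your contrapositive engine stalls exactly there. These cases are instead killed by a variant of your own parity argument: place an even number $m\geq 2$ of the $k$ points on one circle component (the rest elsewhere) and rotate those $m$ points cyclically around their component --- a collision-free motion realizing a cyclic permutation of sign $(-1)^{m-1}=-1$, contradicting the constant sign of $D$. The classical proofs handle arbitrary compacta by such component-by-component case analysis rather than by a clean continuum-classification theorem, and any complete write-up must do the same.

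Second, the parity phenomenon you note is not a consistency check but an outright conflict with the statement as quoted. Your swap argument, applied to $X=S^1$, shows that the cyclic rotation of all $k$ points is an odd permutation when $k$ is even, so no $k$-dimensional Haar system exists on the full circle for even $k$; hence the ``if'' direction, read literally for $X$ homeomorphic to the whole circle with $k$ even, is false, and indeed in the sufficiency direction you only exhibit trigonometric systems of odd dimension $2m+1$. The accurate classical statement carries the caveat that when $X\cong S^1$ the dimension $k$ must be odd (the version quoted in the paper suppresses this), and your proof cannot close the sufficiency direction without restoring it. A minor related slip: on the circle the forbidden motion is this cyclic rotation of all $k$ points, not ``a pair rotated past one another'' --- two points cannot pass each other on $S^1$ while the remaining $k-2$ points stay fixed.
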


With relation to our setting, Mairhuber's result is effectively a necessary condition for generic uniqueness, since our choice of the system of functions is restricted to multivariate polynomials. Hence it is possible to identify a compact set $X$ homeomorphic to a circle and a set of polynomials linearly independent on $X$ that do not provide a unique multivariate approximation to a continuous function on $X$.

\begin{example} Observe that any best approximation to $f$ from Example~\ref{eg:nonunique} on the disk is also the best approximation to $f$ on any subset of the disk that contains the sets $\N(q_0)$ and  $\P(q_0)$. Even though the two different best approximations $q_0$ and $q_1$ coincide on the boundary of the disk, they take different values everywhere in the interior, and hence we can choose another subset of the unit disk that is homeomorphic to a circle (like the one shown in  Fig.~\ref{fig:circle-curve} on the right) to obtain two different optimal solutions. This does not contradict Mairhuber's theorem, since in this case we have restricted ourselves to a very specific choice of the basic functions. 
\end{example}

\section{Structure of the solution set}\label{sec:structure}
\subsection{The location of maximal and minimal deviation points for different optimal solutions}

The key technical result of this section is the following theorem that establishes the existence of uniquely defined subsets of points of maximal and minimal deviation across all optimal solutions. This means that the points of maximal and minimal deviation do not wander around the domain $X$ as we move from one optimal solution to another. 

\begin{theorem}\label{thm:minimalset} Let $f:X\to \R$ be a continuous function defined on a compact set $X\subset\R^n$, let $V$ be a subspace of multivariate polynomials in $n$ variables \eqref{eq:V}, and suppose that $Q$ is the set of optimal solutions to the relevant optimisation problem, as in \eqref{eq:p}. Then
\begin{itemize}
	\item[(i)] $\N(q) = \N(p)$, $\P(q) = \P(p)$ $\forall p,q\in \relint Q$;
	\item[(ii)] $\N(q) \subseteq \N(p)$, $\P(q) \subseteq \P(p)$ $\forall q\in \relint Q, p \in Q$.
\end{itemize}
Here the relative interior is considered with respect to the convex sets of the coefficients in the representation of the solutions as linear combinations of polynomials in $V$.
\end{theorem}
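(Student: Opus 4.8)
The plan is to reduce both statements to a single observation about how the deviation sets behave along line segments inside the convex solution set $Q$. Write $\rho:=\|f-q\|_\infty$, which is constant over $q\in Q$. Given any two solutions $q_0,q_1\in Q$ and any $\lambda\in(0,1)$, convexity of $Q$ guarantees that $q_\lambda:=(1-\lambda)q_0+\lambda q_1\in Q$, so that $\|f-q_\lambda\|_\infty=\rho$ as well. The first step is to establish the key identity
\begin{equation*}
\N(q_\lambda)=\N(q_0)\cap\N(q_1),\qquad \P(q_\lambda)=\P(q_0)\cap\P(q_1)\qquad\text{for all }\lambda\in(0,1).
\end{equation*}
For the inclusion ``$\subseteq$'' I would take $x\in\N(q_\lambda)$, write $q_\lambda(x)-f(x)$ as the convex combination $(1-\lambda)(q_0(x)-f(x))+\lambda(q_1(x)-f(x))$, and note that both summands are bounded above by $\rho$; since their combination attains the value $\rho$ with strictly positive weights, each summand must itself equal $\rho$, forcing $x\in\N(q_0)\cap\N(q_1)$. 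The reverse inclusion is immediate, and the argument for $\P$ is identical. The crucial feature of this identity is that its right-hand side does not depend on $\lambda$.

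The second step is to pass from this segment statement to the relative-interior conclusions using the prolongation characterisation of the relative interior in the finite-dimensional space $V$, identified with $\R^N$ through the basis $G$. To prove (ii), fix $q\in\relint Q$ and an arbitrary $p\in Q$. Because $q$ lies in the relative interior of the convex set $Q$, the segment $[p,q]$ can be extended slightly beyond $q$ while staying in $Q$: there exists $\varepsilon>0$ with $r:=q+\varepsilon(q-p)\in Q$, and rearranging expresses $q$ as the proper convex combination $q=\tfrac{\varepsilon}{1+\varepsilon}\,p+\tfrac{1}{1+\varepsilon}\,r$. Applying the key identity to the pair $p,r$ then yields $\N(q)=\N(p)\cap\N(r)\subseteq\N(p)$, and likewise $\P(q)\subseteq\P(p)$, which is exactly (ii). Statement (i) follows by applying (ii) twice: for $p,q\in\relint Q$ we obtain both $\N(q)\subseteq\N(p)$ and $\N(p)\subseteq\N(q)$, hence equality, and similarly for $\P$.

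The main technical content is concentrated entirely in the key identity, and in particular in the elementary but essential observation that a convex combination with positive weights of two quantities, each at most $\rho$, can equal $\rho$ only when both quantities equal $\rho$; everything downstream is bookkeeping. The points that require care are more foundational than difficult: I must confirm that $\relint Q$ is nonempty, so that the statements are not vacuous, which holds because $Q$ is a nonempty convex subset of the finite-dimensional space $V$; and I must invoke the prolongation characterisation of the relative interior in the correct form, namely that $q\in\relint Q$ admits, for every $p\in Q$, an extension of the segment $[p,q]$ slightly past $q$ that remains in $Q$. I would also stress that the deviation sets are defined pointwise on $X$ through the values of the polynomials, so that every manipulation above takes place at a fixed $x\in X$ and requires nothing beyond continuity of $f$ and the convexity of $Q$.
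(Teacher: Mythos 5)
Your proof is correct, and it takes a mildly different route from the paper's. The paper isolates the content in Proposition~\ref{prop:linmax}, a general statement about minimisers of a pointwise maximum of linear functions: the active index sets satisfy $I(x)\subseteq I(y)$ whenever $x\in\relint Q$ and $y\in Q$, proved by contradiction --- an index $t$ active at $x$ but not at $y$ makes the objective strictly exceed the minimum everywhere on the ray $\{x-\alpha(y-x)\,:\,\alpha>0\}$, so the segment $[y,x]$ cannot be prolonged past $x$ inside $Q$, contradicting $x\in\relint Q$; the theorem then follows by encoding $\N(q)$ and $\P(q)$ as the indices $(x,-1)$ and $(x,1)$ of the family in \eqref{eq:max}. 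Your key lemma is instead the segment identity $\N(q_\lambda)=\N(q_0)\cap\N(q_1)$, $\P(q_\lambda)=\P(q_0)\cap\P(q_1)$ for all $\lambda\in(0,1)$, proved directly at each fixed $x\in X$ by the positive-weights argument (a strict convex combination of quantities bounded by $\rho$ equals $\rho$ only if both do), after which the prolongation characterisation of the relative interior --- your point $r=q+\varepsilon(q-p)\in Q$ --- delivers (ii), and (i) follows by symmetry. Both arguments rest on the same two pillars, namely the affine dependence of $f(x)-q(x)$ on the coefficients of $q$ at each fixed $x$, and the line-extension property of $\relint$ (which is precisely where the paper's contradiction bites), so the proofs are at bottom dual to one another; but your decomposition buys a slightly stronger intermediate fact: the deviation sets are constant, equal to the intersection $\N(q_0)\cap\N(q_1)$, along the entire open segment between \emph{any} two solutions, which yields (i) without a detour through (ii) and in passing re-derives the paper's subsequent corollary on the essential sets $\N$ and $\P$. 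What the paper's route buys in exchange is reusability: Proposition~\ref{prop:linmax} applies verbatim to any minimisation of a max of linear (or affine) functions, independent of the approximation setting. Your foundational caveats --- nonemptiness and convexity of $Q$, hence nonemptiness of $\relint Q$ in the coefficient space $\R^N$, and the correct direction of the prolongation criterion --- are all in order.
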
 

For the proof of this lemma, we will need the following elementary result about max-type convex functions.

\begin{proposition}\label{prop:linmax} Let $f:\R^n\to \R$ be a pointwise maximum over a family of linear functions, 
$$
f(x) = \max_{t\in T} f_t(x), \quad f_t:\R^n\to \R \text{ linear } \forall t\in T.
$$
Let $I(x) = \{t\,|\, f_t(x) = f(x)\} $,  $Q:= \Argmin\limits_{x\in \R^n}f(x)$. If $Q\neq \emptyset$, then 
\[
I(x) \subseteq I(y)\quad \forall x\in \relint Q, y\in Q.
\]
\end{proposition}
\begin{proof} Let $x\in \relint Q$, $y\in Q$. Assume that there exists $t\in T$ such that $t\in I(x)\setminus I(y)$. Then $f(x) = f(y) = f_t(x)>f_t(y)$, and since $f_t$ is linear, we then have
$$
f(x-\alpha (y-x) ) \geq f_t(x-\alpha (y-x)) = f_t(x) - \alpha (f_t(y)-f_t(x)) > f(x) \quad \forall \alpha>0,
$$ 
hence, $x-\alpha (y-x)\notin Q$ for $\alpha >0$, while $y=x+(y-x)\in Q$, which means $x\notin \relint Q$, a contradiction. 
\end{proof}

\begin{proof}[Proof of Theorem~\ref{thm:minimalset}] Recall that our objective function can be represented as the maximum over a family of linear functions, as in \eqref{eq:max}. For every polynomial $q\in V$ define the set of active indices 
\[
I(q) = \{(x,s)\in X\times\{-1,1\}\,|\, s (f(x)-q(x)) = \|f-q\|_\infty\}.
\]
It is evident from the definition  \eqref{eq:defNP} of $\N(q)$ and $\P(q)$ that 
$$
x\in \N(q) \; \Leftrightarrow (x,-1) \in I(q); \quad x\in \P(q) \; \Leftrightarrow (x,1) \in I(q).
$$
The result now follows from Proposition~\ref{prop:linmax}.
\end{proof}

The following 
corollary of Theorem~\ref{thm:minimalset} characterises the structure of the location of maximal deviation points corresponding to different optimal solutions. 

\begin{corollary}
The sets of points of minimal and maximal deviation remain constant if the optimal solutions belong to  the relative interior of the solution set. Additional maximal and minimal deviation points  can only occur if an optimal solution is on the relative boundary. 
\end{corollary}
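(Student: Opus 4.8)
The plan is to read the corollary as a plain-language restatement of Theorem~\ref{thm:minimalset}, so the whole argument reduces to unpacking the two inclusions established there. First I would observe that $Q$ is a nonempty convex subset of the finite-dimensional space $V$ (as noted in the discussion following \eqref{eq:max}), hence its relative interior $\relint Q$ is nonempty. I would then fix any reference solution $q_0\in\relint Q$ and set $\N_0:=\N(q_0)$ and $\P_0:=\P(q_0)$; these serve as the candidate ``constant'' sets of minimal and maximal deviation across the relative interior.

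For the first sentence of the corollary, part (i) of Theorem~\ref{thm:minimalset} gives $\N(q)=\N(q_0)=\N_0$ and $\P(q)=\P(q_0)=\P_0$ for every $q\in\relint Q$, so these sets are literally constant as $q$ ranges over the relative interior, and nothing further is needed.

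For the second sentence I would use part (ii): for every $p\in Q$ we have $\N_0=\N(q_0)\subseteq\N(p)$ and $\P_0=\P(q_0)\subseteq\P(p)$. Thus the only deviation points that a general optimal solution $p$ can carry beyond the fixed sets are those in the differences $\N(p)\setminus\N_0$ and $\P(p)\setminus\P_0$. If $p$ happened to lie in $\relint Q$, part (i) would force $\N(p)=\N_0$ and $\P(p)=\P_0$, making both differences empty; contrapositively, whenever such an additional point exists, $p$ must lie in $Q\setminus\relint Q$, i.e.\ on the relative boundary of the solution set. This is exactly the stated claim.

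I do not anticipate any genuine obstacle here, since the corollary is a verbal recasting of the theorem. The only points warranting a line of care are the nonemptiness of $\relint Q$ (guaranteed by convexity and finite dimension) and making the phrase ``additional points'' precise as the set differences above; once those are fixed, both assertions follow immediately from parts (i) and (ii) respectively.
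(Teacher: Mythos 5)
Your proposal is correct and matches the paper's intent exactly: the paper states this corollary without proof precisely because it is the verbal restatement of Theorem~\ref{thm:minimalset}, with the first sentence following from part (i) and the second from part (ii) via the contrapositive you give. Your added care about the nonemptiness of $\relint Q$ (convexity plus finite dimension) and the precise reading of ``additional points'' as set differences is sound and fills in exactly the routine details the paper leaves implicit.
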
 

For any given continuous function $f$ defined on a compact set $X$ we can hence define the \emph{minimal} or \emph{essential} sets of points of minimal and maximal deviation, 
$$
\P = \P(q), \; \N = \N(q), \; q\in \relint Q,
$$
where $\P(q)$ and $\N(q)$ are defined in the standard way, as in \eqref{eq:defNP}. For instance, in Example~\ref{eg:nonunique} we have $\N = \{z_1,z_3,z_5\}$ and $\P = \{z_2,z_4,z_6\}$, while $\N(q_1)$ contains an additional point $z_0$. 

Note that the essential pair of sets is uniquely defined, and is different to the definition of critical subsets given in \cite{Rice}.

\subsection{Dimension of the solution set}
We next focus on the relation between the family of separating polynomials and the dimension of solution set. 

For a fixed continuous function $f:X\to \R$ and a polynomial $q\in V$ consider the set of all polynomials in $V$ that separate the points of minimal and maximal deviation,
$$
S(q) = \{s\in V\,|\, s(x)\cdot s(y)\leq 0 \, \forall x\in \P(q),y\in \N(q)\}. 
$$
Notice that the zero polynomial is always in $S(q)$, and for the polynomials in the optimal solution set we may have a nontrivial set of separating functions. This happens in particular when all points of minimal and maximal deviation are located on an algebraic variety of a subset of $V$.

Since the pair of sets of minimal and maximal deviation is minimal on the interior of $Q$, and such minimal pair is unique according to Theorem~\ref{thm:minimalset}, we can define the \emph{maximal} set of separating polynomials as $S = S(q) $ for $q\in \relint Q$.

For the rest of the section, we work with an arbitrary fixed continuous real-valued function $f$ defined on a compact set $X\subset \R^n$, so we do not repeat this assumption in each statement, and simply refer to the solution set $Q$ of the corresponding Chebyshev approximation problem.

\begin{lemma}\label{lem:dim} For the solution set $Q$ we have $\dim Q\leq \dim S$; moreover, for any $q,p\in Q$ we have 
	$p-q\in S(p)\subseteq S$.     
\end{lemma}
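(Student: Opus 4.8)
The plan is to prove the ``moreover'' statement first, since the dimension bound $\dim Q\le \dim S$ will follow from it together with the convex geometry of $Q$. I would fix arbitrary $q,p\in Q$ and write $\mu:=\|f-q\|_\infty=\|f-p\|_\infty$ for their common optimal value. The goal is to show that the difference $s:=p-q$ separates $\P(p)$ from $\N(p)$ in the sense defining $S(p)$, i.e.\ $s(x)\,s(y)\le 0$ for all $x\in\P(p)$, $y\in\N(p)$.

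The heart of this is a direct sign analysis on the deviation sets of $p$. For $x\in\P(p)$ we have $f(x)-p(x)=\mu$, so $s(x)=p(x)-q(x)=(f(x)-q(x))-\mu\le 0$, where the inequality uses optimality of $q$ in the form $f(x)-q(x)\le\mu$. Symmetrically, for $y\in\N(p)$ we have $p(y)-f(y)=\mu$, whence $s(y)=(f(y)-q(y))+\mu\ge 0$, using $f(y)-q(y)\ge-\mu$. Thus $s$ is nonpositive on $\P(p)$ and nonnegative on $\N(p)$, giving $s(x)s(y)\le 0$ and hence $p-q\in S(p)$. The inclusion $S(p)\subseteq S$ is then immediate from Theorem~\ref{thm:minimalset}(ii): choosing $q\in\relint Q$ gives $\P=\P(q)\subseteq\P(p)$ and $\N=\N(q)\subseteq\N(p)$, so the separation condition defining $S(p)$ is imposed over a larger set of pairs than the one defining $S=S(q)$, and therefore every polynomial satisfying it automatically lies in $S$.

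For the dimension bound I would fix $q_0\in\relint Q$ and apply the ``moreover'' part with $q=q_0$: for every $p\in Q$ we obtain $p-q_0\in S(p)\subseteq S$, that is, $Q-q_0\subseteq S$. Next I would exploit that $S$ is a symmetric cone: from its definition, $s\in S$ implies $\lambda s\in S$ for every $\lambda\in\R$, since $(\lambda s)(x)(\lambda s)(y)=\lambda^2 s(x)s(y)\le 0$. The decisive move is to promote the inclusion of the convex set $Q-q_0$ into the inclusion of its linear span. Because $q_0\in\relint Q$, the origin lies in the relative interior of $Q-q_0$, so for any direction $v$ in $L:=\lspan(Q-q_0)$ there is a small $t>0$ with $tv\in Q-q_0\subseteq S$; scaling by $1/t$ and using the cone property yields $v\in S$. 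Hence the entire subspace $L$ is contained in $S$, and since $\dim Q=\dim(Q-q_0)=\dim L$ we conclude $\dim Q\le\dim S$.

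The step I expect to be the main obstacle is precisely this last passage from ``$Q-q_0\subseteq S$'' to a genuine \emph{dimensional} inequality: $S$ is only a symmetric cone and need not be a linear subspace, so one cannot directly compare spans. The resolution hinges on placing the base point $q_0$ in the relative interior of $Q$, which guarantees that $Q-q_0$ is a relatively full-dimensional convex neighbourhood of the origin inside $L$, allowing the positive-scaling invariance of $S$ to recover every direction of $L$. I would also state explicitly which meaning of $\dim S$ is intended (the dimension of the largest linear subspace contained in $S$, equivalently of $\lspan S$ once $L\subseteq S$ is established), so that the inequality is unambiguous.
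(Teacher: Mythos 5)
Your proof is correct and takes essentially the same route as the paper: the same sign analysis showing $p-q$ is nonpositive on $\P(p)$ and nonnegative on $\N(p)$ (hence $p-q\in S(p)$), the same use of Theorem~\ref{thm:minimalset}(ii) to obtain $S(p)\subseteq S$, and the same dimension bound via a base point $q_0\in\relint Q$. The only difference is expository: you spell out the step the paper compresses into ``$\aff Q\subseteq S+q$, hence $\dim Q\leq \dim S$'' --- namely that the scaling invariance of $S$ together with $q_0\in\relint Q$ upgrades $Q-q_0\subseteq S$ to $\lspan(Q-q_0)\subseteq S$ --- which is a useful clarification rather than a different argument.
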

\begin{proof} Observe that it is enough to show that for any $q\in \relint Q$ and any $p\in Q$ we have $p-q\in S$. It then follows that $\aff Q \subseteq S+q$, and hence $\dim Q \leq \dim S$. 

Let $q\in \relint Q$ and assume $p\in Q$. By Theorem~\ref{thm:minimalset} we have $\N(q) \subseteq \N(p)$, $\P(q)\subseteq \P(p)$, therefore
\begin{align*}
f(u) - p(u) &\leq f(u) -q(u)\quad  \forall u \in \N(p), \\
f(u) - p(u) &\geq f(u) -q(u)\quad \forall u \in \P(p).
\end{align*}
Let $s(x) = p(x)-q(x)$. We have 
\[
s(u) =  p(u) - q(u) \geq 0 \quad \forall u\in \N(p), \qquad
s(u) = p(u)- q(u) \leq 0 \quad \forall u\in \P(p),
\]
and so $s(u) \in S(p)\subseteq S(q)$. 
\end{proof}

\begin{corollary}
If for the solution set $Q$ we have $\dim Q >0$, then all essential points of minimal and maximal deviation lie \emph{on} a variety of some nontrivial polynomial $s\in V$. 
\end{corollary}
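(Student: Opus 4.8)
The plan is to exhibit an explicit nontrivial polynomial $s\in V$ that vanishes on the entire essential set $\N\cup\P$, which is stronger than merely producing a separating polynomial in $S$. Since $\dim Q>0$, the relative interior $\relint Q$ contains a point $q$, and $Q$ contains some $p\neq q$; because $p,q$ are distinct elements of the polynomial space $V$, the difference $s:=p-q$ is a nonzero, hence nontrivial, polynomial in $V$. This $s$ is my candidate, and the goal is to show $\N\cup\P$ lies on its variety $\{x: s(x)=0\}$.

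First I would record that all optimal solutions share the same optimal value $\mu:=\|f-q\|_\infty=\|f-p\|_\infty$. The core step is then to upgrade the inequalities in the proof of Lemma~\ref{lem:dim} to equalities on the essential sets. For $u\in\N=\N(q)$ we have $q(u)-f(u)=\mu$ by definition; since $q\in\relint Q$ and $p\in Q$, Theorem~\ref{thm:minimalset}(ii) gives $\N(q)\subseteq\N(p)$, so $u\in\N(p)$ and hence $p(u)-f(u)=\mu$ as well. Subtracting the two equalities yields $s(u)=p(u)-q(u)=0$. The symmetric computation on $u\in\P=\P(q)$, using $\P(q)\subseteq\P(p)$ together with $f(u)-q(u)=f(u)-p(u)=\mu$, again gives $s(u)=0$. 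Thus $s$ vanishes on $\N\cup\P$, as required.

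The step I expect to be the real (conceptual rather than computational) obstacle is seeing why membership in $S$ alone is not enough. The condition $s\in S$ only guarantees $s(x)\,s(y)\le 0$ for $x\in\P$, $y\in\N$, i.e. that $\N$ and $\P$ sit in opposite closed regions delimited by the variety of $s$; it does not force them onto the variety. The missing ingredient is that on the \emph{essential} sets---as opposed to the possibly larger sets $\N(p)$, $\P(p)$---both optimal solutions attain their extreme deviation \emph{exactly}, which pins $p$ and $q$ to the same value and converts the one-sided inequalities of Lemma~\ref{lem:dim} into equalities. This exact attainment is available precisely because $\N,\P$ are read off from a relative-interior solution. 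Equivalently, one could use that $q\in\relint Q$ permits perturbation in both directions along $\aff Q$: for a nonzero direction $d$ in the space parallel to $\aff Q$, both $q+td$ and $q-td$ lie in $Q$ for small $t>0$, and applying the signed inequalities of Lemma~\ref{lem:dim} to each sandwiches $d$ between $\ge 0$ and $\le 0$ on each of $\N$ and $\P$, forcing $d\equiv 0$ there; I would prefer the direct computation above, since it produces the equalities in one stroke.
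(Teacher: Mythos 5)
Your proof is correct and is essentially the paper's own argument: both take $q\in\relint Q$ and a distinct $p\in Q$ (available since $\dim Q>0$), use Theorem~\ref{thm:minimalset}(ii) to place the essential sets $\N=\N(q)$ and $\P=\P(q)$ inside $\N(p)$ and $\P(p)$, and subtract the two exact attainments of the optimal deviation to conclude that $s=p-q$ is a nontrivial polynomial in $V$ vanishing on $\N\cup\P$. Your final paragraph merely spells out what the paper leaves implicit, namely why the one-sided inequalities from the proof of Lemma~\ref{lem:dim} become equalities on the essential sets.
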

\begin{proof} This follows directly from a modification of the proof of Lemma~\ref{lem:dim}: if $Q$ is of dimension 1 or higher, then there exist two different polynomials $q\in \relint Q$ and $p\in Q$. We have  
\begin{align*}
f(u) - p(u) &= f(u) -q(u)\quad  \forall u \in \N = \N(q), \\
f(u) - p(u) &= f(u) -q(u)\quad \forall u \in \P= \P(q).
\end{align*}
Hence, for $s(x)  =p(x)-q(x)$ we have $s(u) = p(u)-q(u) = 0 \quad \forall u \in \N\cup \P$. 
\end{proof}

The next corollary is a well-known uniqueness result. 

\begin{corollary}\label{cor:unique} If the set $S$ is trivial, then the optimal solution is unique.
\end{corollary}
\begin{proof} If $S=\{0\}$, then $\dim S = 0$, and by Lemma~\ref{lem:dim} we have $\dim Q = 0$. 
\end{proof}	
\subsection{Uniqueness and smoothness}
It may happen that the dimensions of $Q$ and $S$ do not coincide, as we demonstrate in the next example.

\begin{example}\label{eg:unique-and-not} Let $f(x,y) = (x^2 - \frac{1}{2})(1 - y^2)$ and consider the problem of finding a best linear approximation of this function on the square $X = [-1,1]\times[-1,1]$. 
\begin{figure}[ht]{\centering
\includegraphics[width=0.3\textwidth]{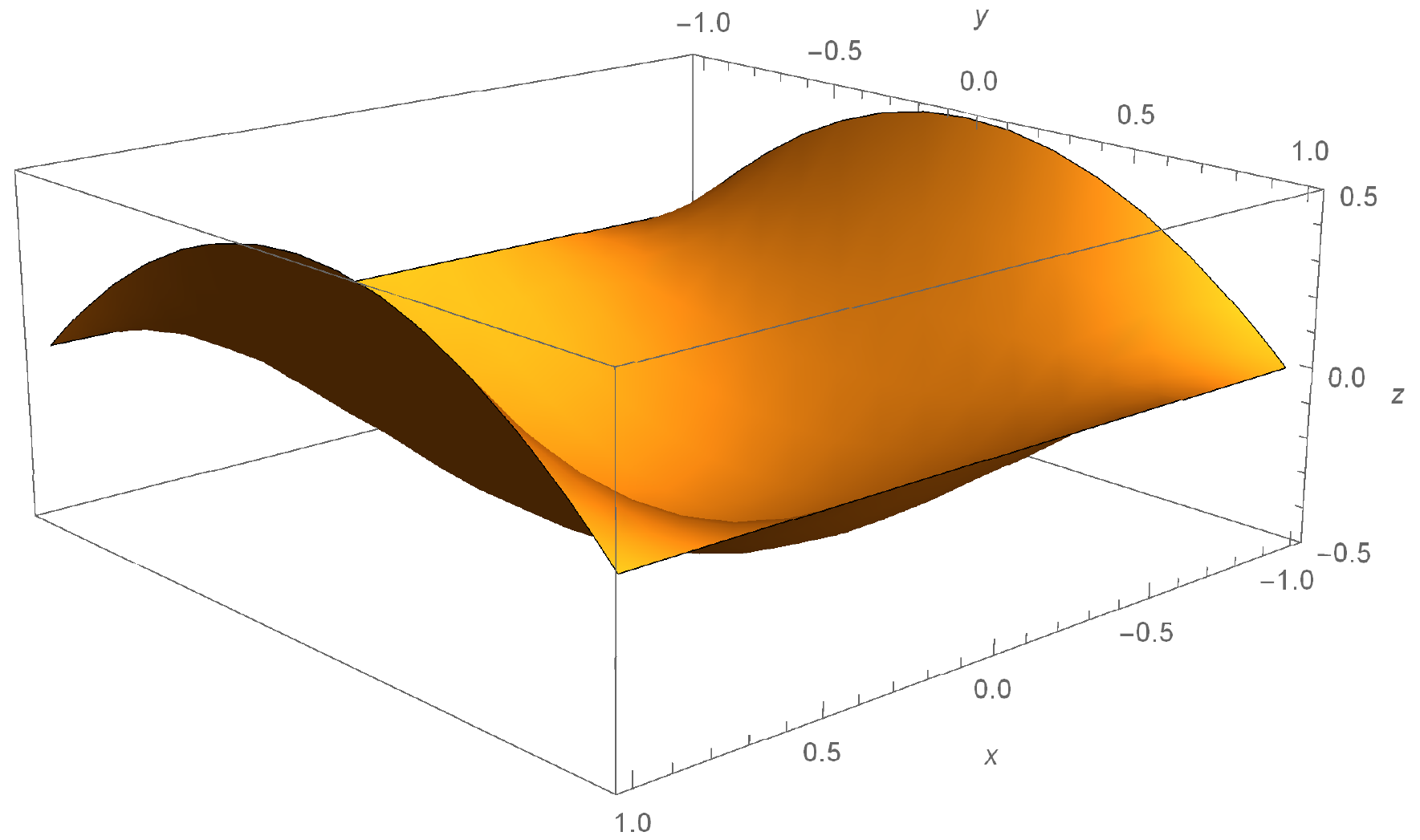}\quad 
\includegraphics[width=0.3\textwidth]{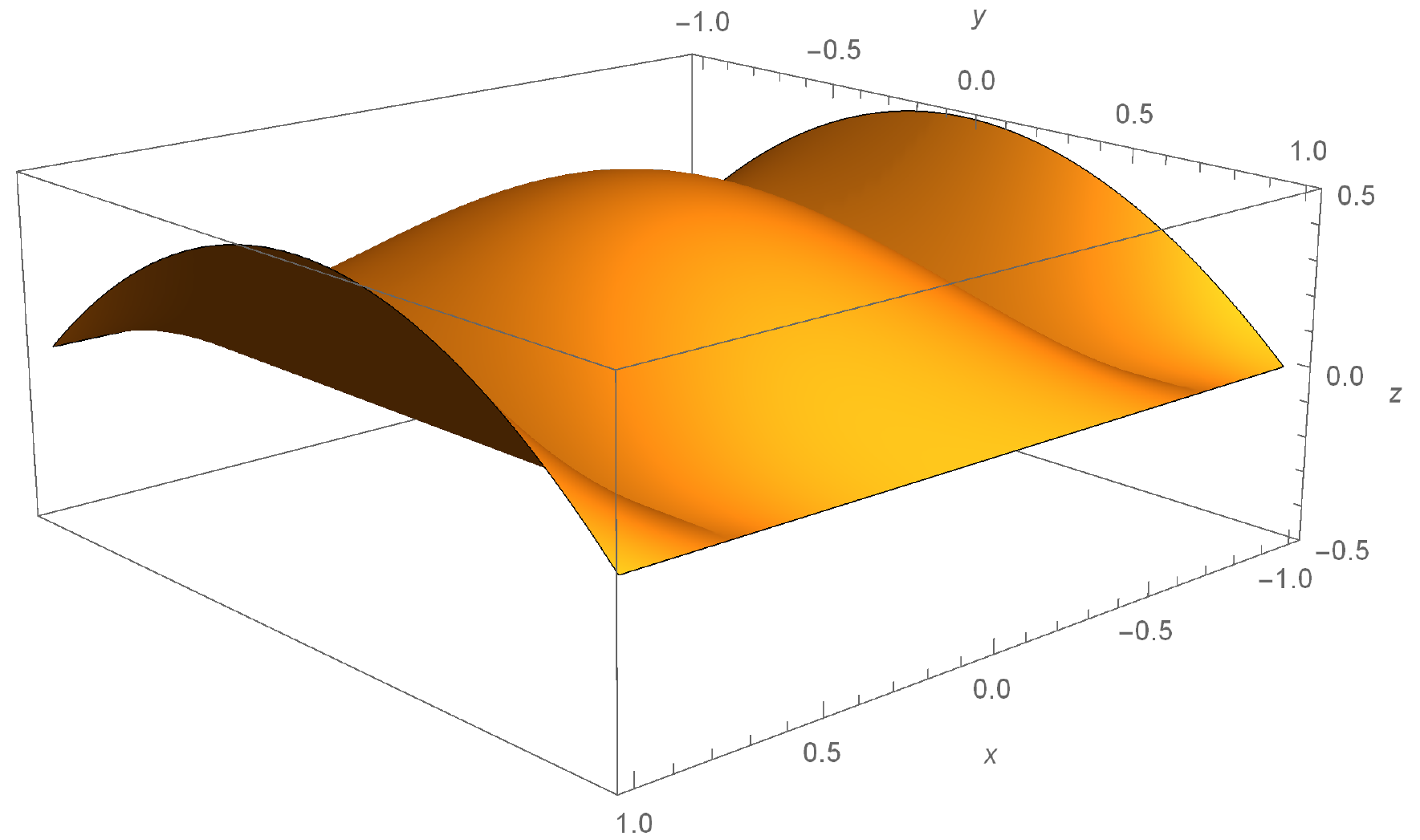}\quad 
\includegraphics[width=0.3\textwidth]{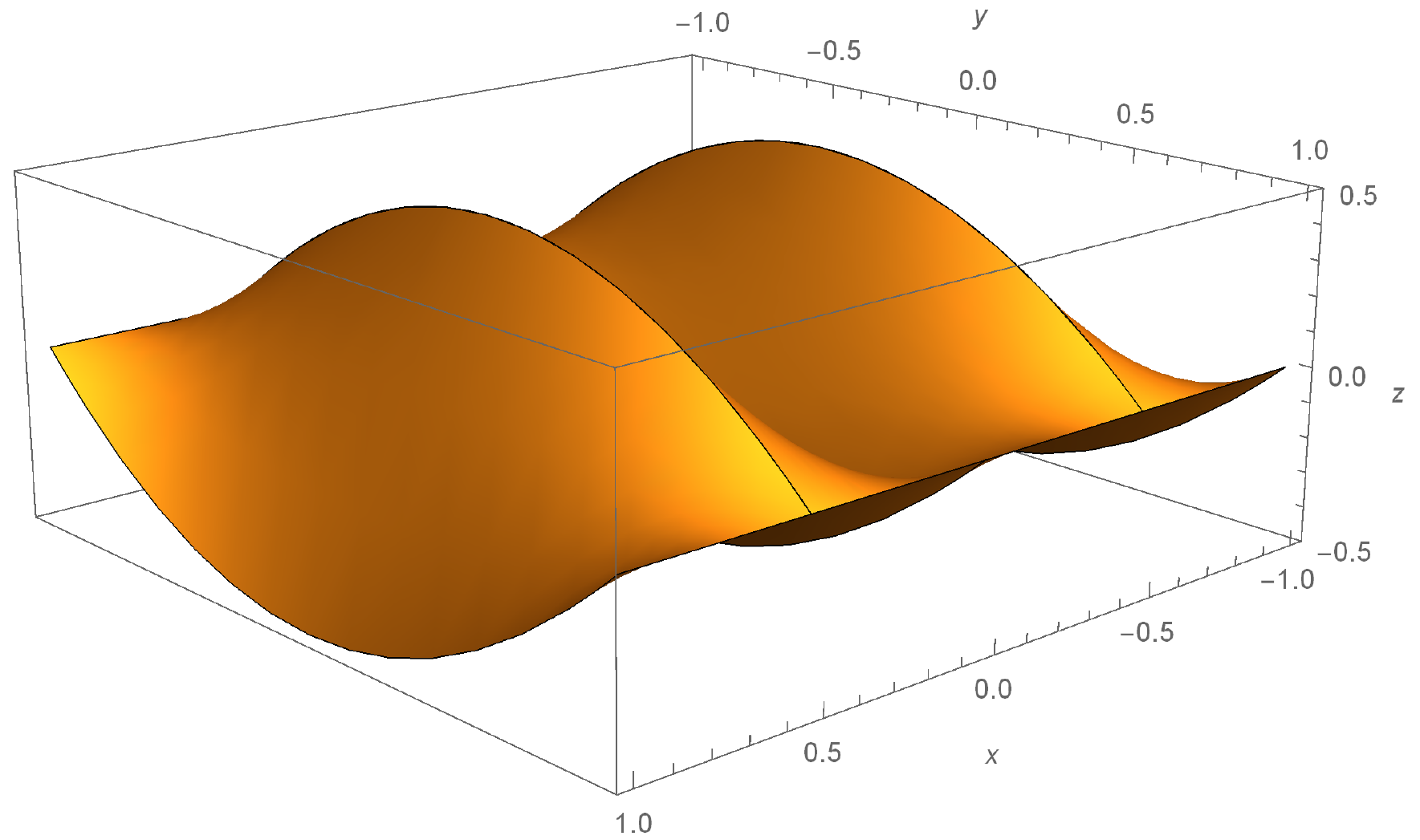}\\}
\caption{The function $f(x,y) = (x^2 - \frac{1}{2})(1 - y^2)$ (on the left), the absolute deviation of $f$ from the constant $q_0(x,y) \equiv 0$, $|d_0(x,y)| = |f(x,y)-q_0(x,y)|$ (middle), and the function $g(x,y)$.}
\label{fig:function1}
\end{figure}
It is not difficult to verify that the constant function $q_0(x,y)\equiv 0$ is an optimal solution: the points of maximal deviation are the maxima of $f(x,y)$ on the square,  attained at $\P(q_0) = \{(1,0),(-1,0)\}$; the set of points of minimal deviation is a singleton $\N(q_0)  = \{(0,0)\}$ (we provide technical details in the appendix). 

Since these three alternating points of maximal and minimal deviation lie on a straight line $y=0$, there is no strict linear separator between them (see the left image in Fig.~\ref{fig:function2dev}), hence this constant solution must be optimal by Theorem~\ref{thm:classicsep}. Also notice that taking any point out of either $\N(q_0)$ or $\P(q_0)$ ruins the optimality condition (in fact, our configuration of the points of minimal and maximal deviation is \emph{critical} in the notation of \cite{Rice}). Hence we must have $\N = \N(q_0)$ and $\P = \P(q_0)$, so these are the essential sets of the points of minimal and maximal deviation. These three points can be separated non-strictly by the linear functions of the form $l(x,y)= \alpha y$, $\alpha \in\R$. We therefore have
\[
S  =  \{\alpha y \,|\,  \alpha \in \R\}.
\] 
Even though $\dim S = 1$, the best linear approximation is unique. It follows from Lemma~\ref{lem:dim} that $Q\subseteq S$, and hence any best linear approximation should have the form $q_\alpha(x,y) = \alpha y$ for some $\alpha \in \R$. When $x = \pm 1$, we have the deviation $d_\alpha (x,y) = f(x,y) - q_\alpha(x,y) = \frac{1-y^2}{2}- \alpha y $. The maximun of $d_\alpha (x,y)$ is attained at $y=-\alpha$, with the value $d_\alpha(\pm 1, -\alpha) = \frac{1}{2}+\frac{\alpha^2}{2}>\frac{1}{2}$ for $\alpha\neq 0$, which means that there are no optimal solutions in the neighbourhood of $q_0(x,y)\equiv 0$, and hence, due to the convexity of $Q$, the best approximation is unique.

\begin{figure}[ht]{\centering
\includegraphics[width=0.3\textwidth]{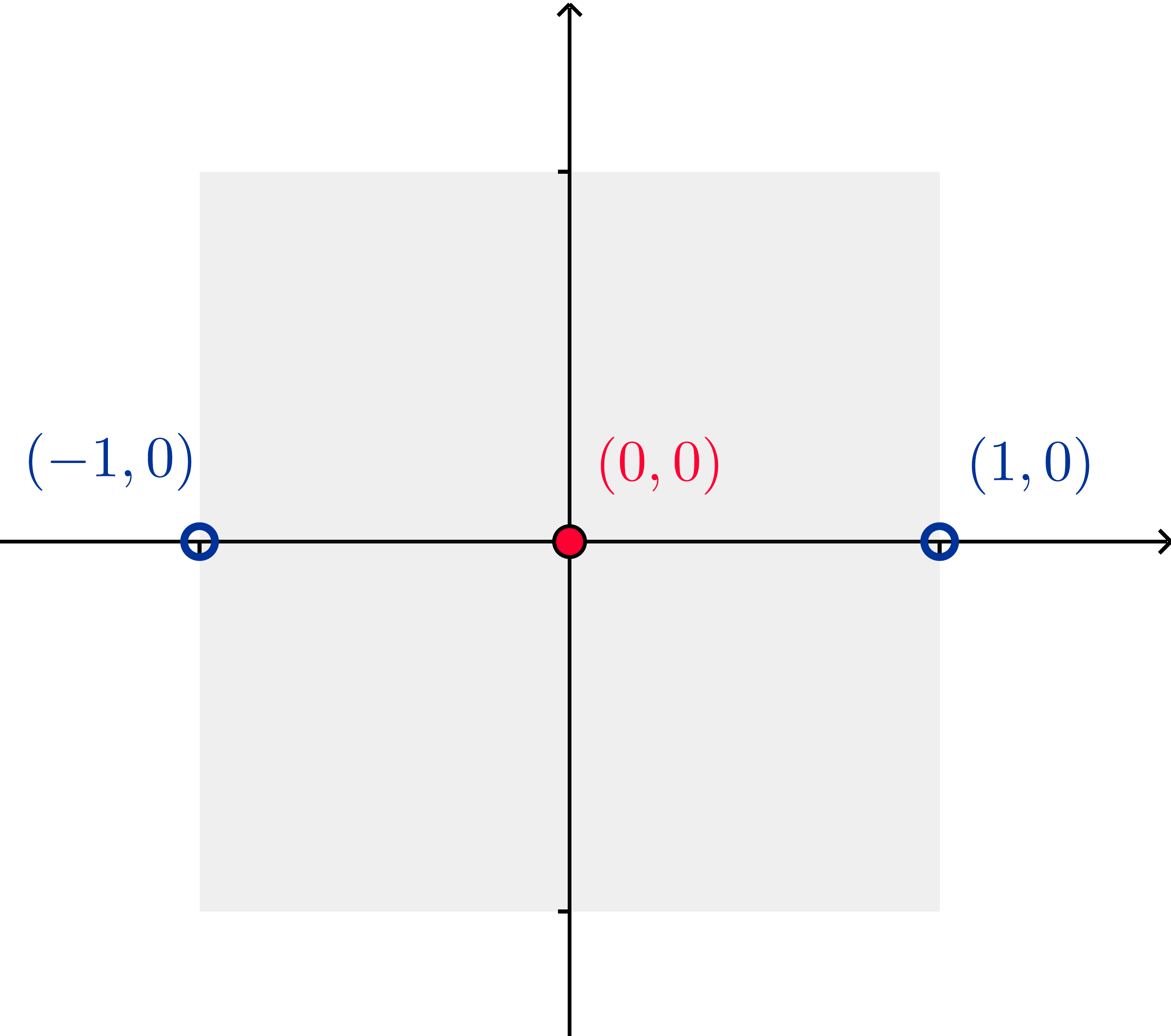}\quad 
\includegraphics[width=0.3\textwidth]{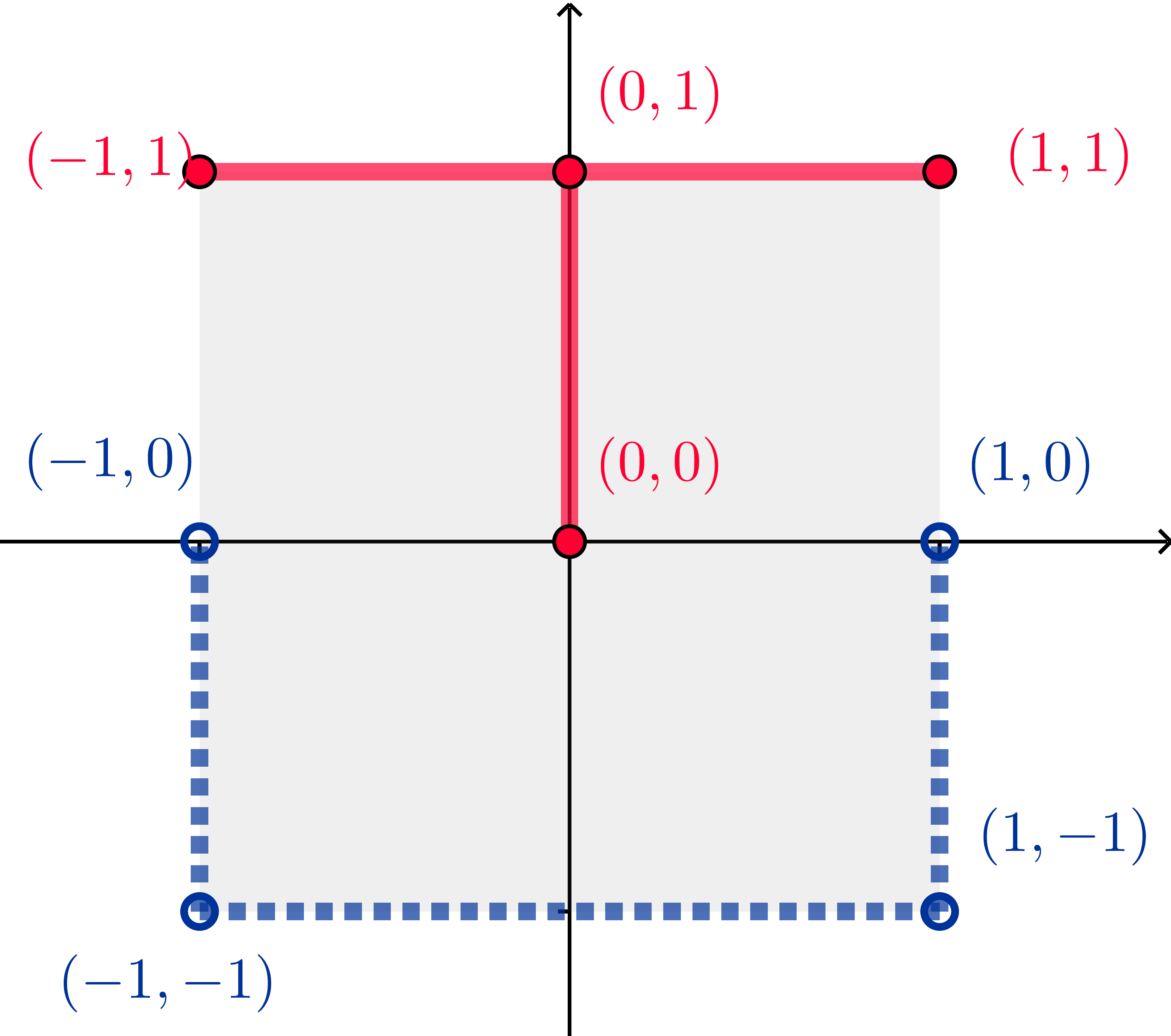}\quad 
\includegraphics[width=0.3\textwidth]{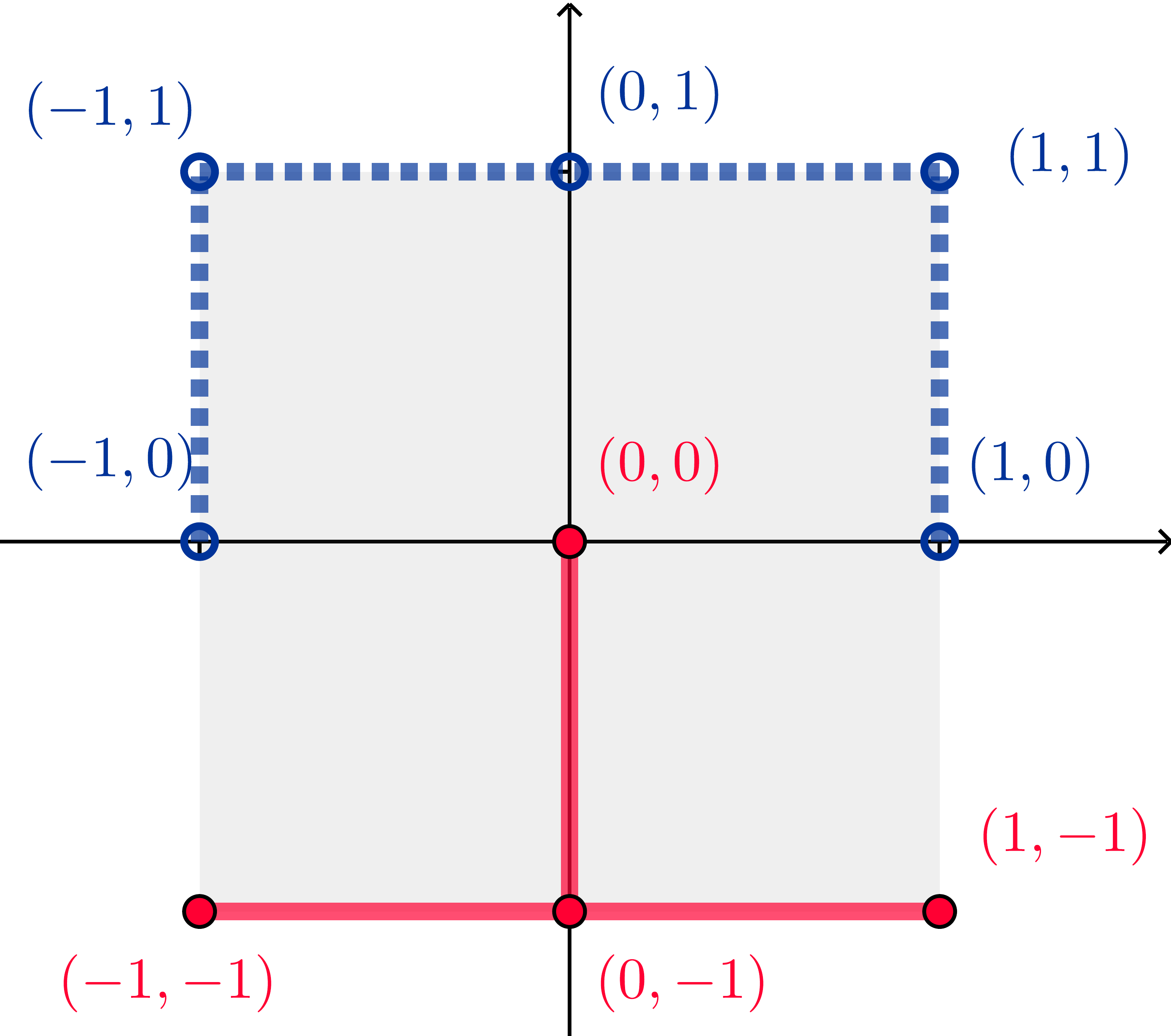}\\}
\caption{The points of minimal and maximal deviation for different cases: on the left for $f$ and $h$ and $q_0$; in the middle for $h$ and $q_{\frac{1}{2}}(x,y) = \frac{y}{2}$; on the right for $h$ and $q_{-\frac{1}{2}}(x,y) = -\frac{y}{2}$.}
\label{fig:function2dev}
\end{figure}

Now consider a modified example: let $h(x,y) = (x^2 - \frac{1}{2})(1 - |y|)$ (see Fig.~\ref{fig:function2}, left hand side). The same trivial constant function $q_0(x,y)\equiv 0$ is a best linear approximation to $h$, with the same sets of points of minimal and maximal deviation (see Fig.~\ref{fig:function1}, right). However, this best approximation is not unique: any function $q_\alpha(x,y)=\alpha y $  for $\alpha \in \left[-\frac{1}{2},\frac{1}{2}\right]$ is also a best linear approximation of $f$ on the square $X$ (see appendix for technical computations). Moreover, the sets of points of maximal and minimal deviation are different at the endpoints of the optimal interval, i.e. for $\alpha = \pm \frac 12$, see Fig.~\ref{fig:function2dev} (the technical computations are presented in appendix).

\begin{figure}[ht]{\centering
\includegraphics[width=0.32\textwidth]{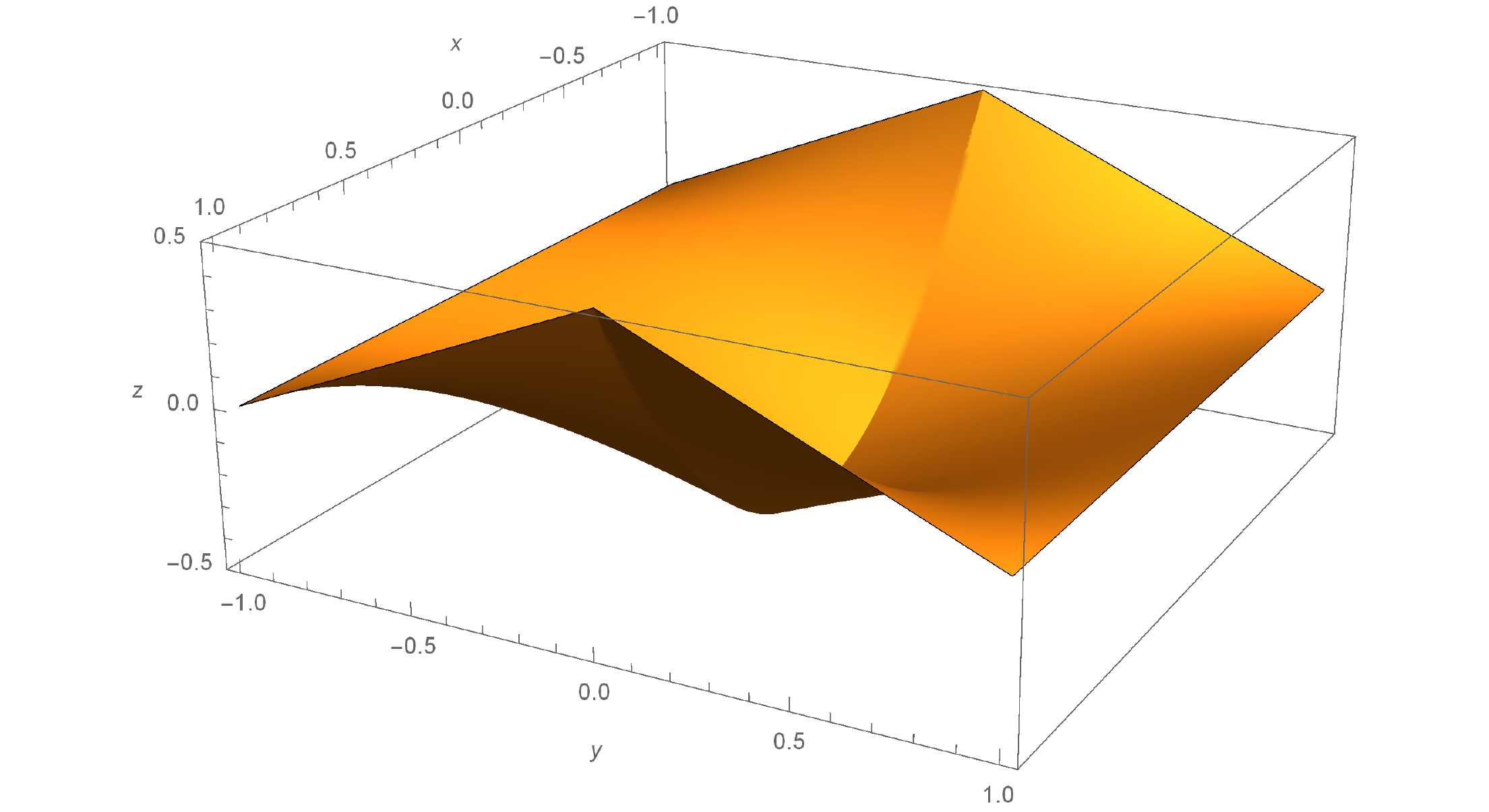} \includegraphics[width=0.32\textwidth]{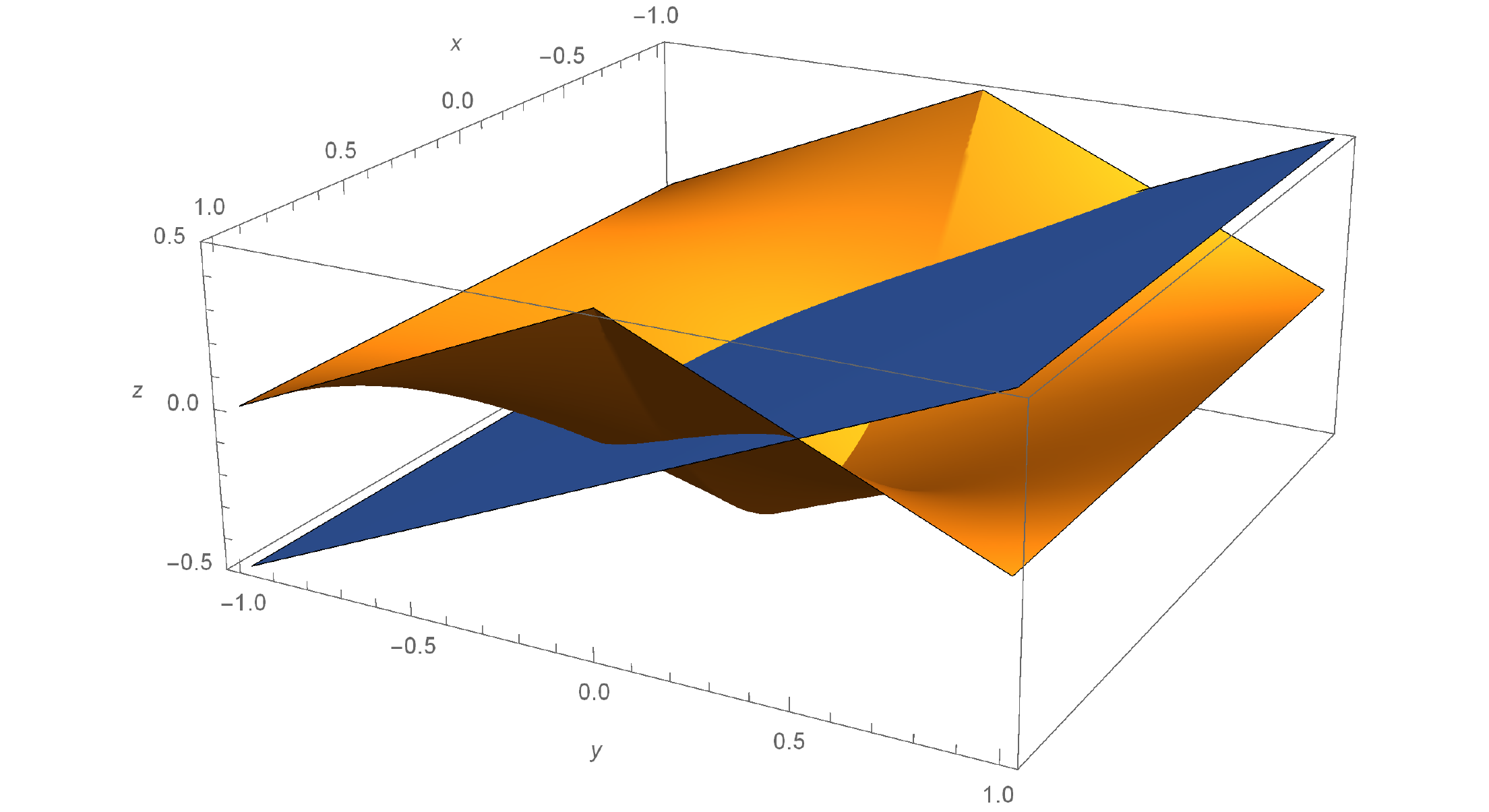} \includegraphics[width=0.32\textwidth]{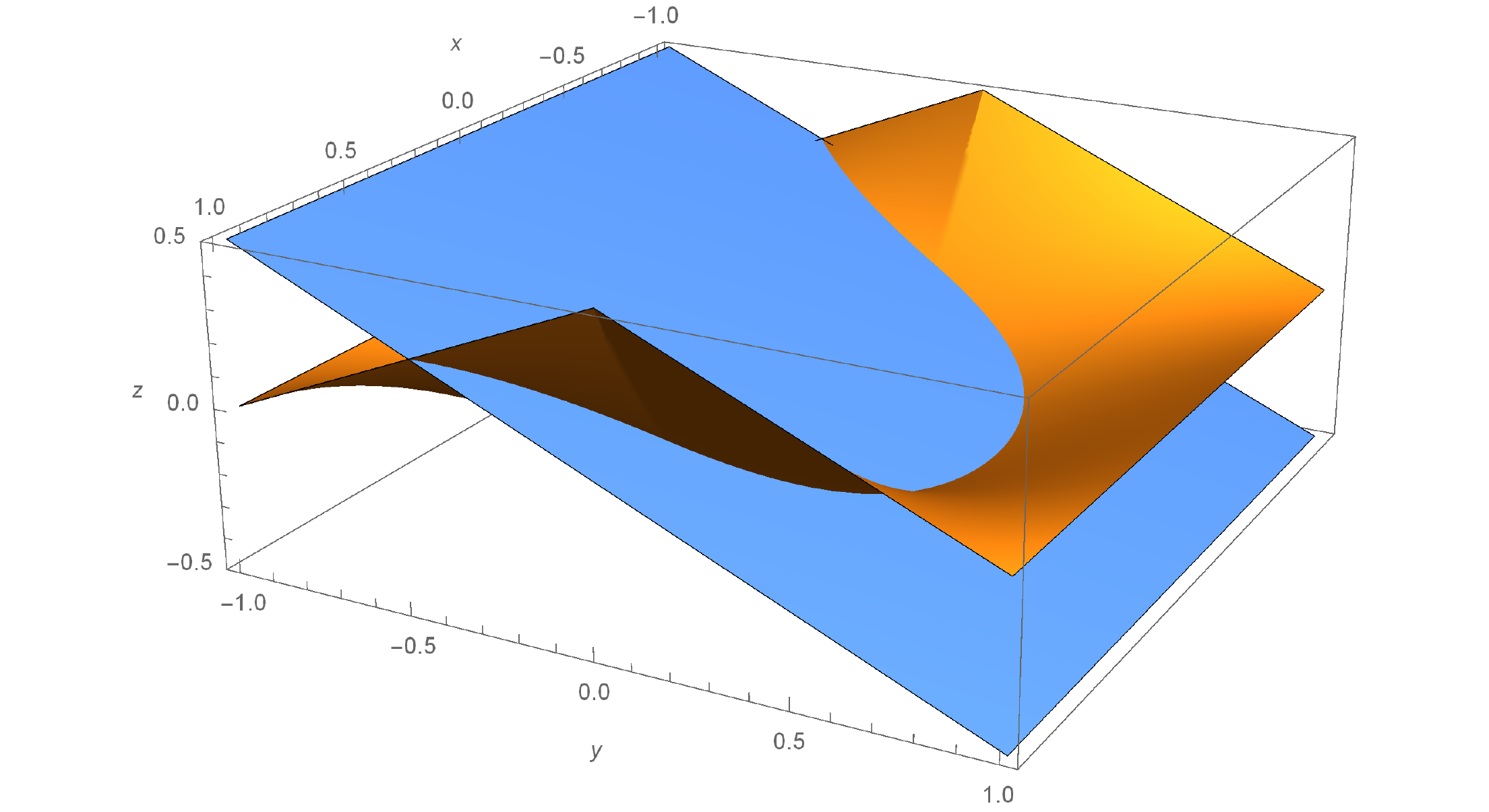}\\}
\caption{The function $h(x,y) = (x^2 - \frac{1}{2})(1 - |y|)$ on the left, and the same function shown together with two different best approximations: $q_{\frac{1}{2}}(x,y) = \frac{y}{2}$ and $q_{-\frac{1}{2}}(x,y) = -\frac{y}{2}$.}
\label{fig:function2}
\end{figure}


Finally, we would like to point out that smoothness of the function that we are approximating is not necessary for the uniqueness of a best approximation, as one may be tempted to conclude from the study of the functions $f$ and $h$. Note that for yet another modification,
\[
g(x,y) := (\min\{ |2x|, 2-|2x|\} - 1/2)(1-y^2),
\]
the function $q_0(x,y) \equiv 0$ is a unique best approximation, while the points of maximal and minimal deviation are distributed in a similar fashion, along the line $y=0$, potentially allowing for nonuniqueness. Notice that the function $g(x,y)$ is nondifferentiable at the points of minimal and maximal deviation. This function is however smooth in $y$ for every fixed $x$. This observation is related to the problem of relating the specific (partial) smoothness properties of the function we are approximating with the solution set. We discuss this open question in some detail in the conclusions section.
\end{example}

%

We have seen from the preceding example that whether the Chebyshev approximation problem has a solution is determined not only by the location of points of maximal and minimal deviation, but also by the properties of the function that is being approximated; in particular the smoothness of the function at the points of minimal and maximal deviation appears to be a decisive factor. 

\begin{example}\label{eg:bumps}For the distribution of points of maximal and minimal deviation from Example~\ref{eg:nonunique}, i.e. $N = \{z_1,z_3,z_5\}$, $P = \{z_2,z_4,z_6\}$, where $z_1,z_2,\dots z_6$ are defined by \eqref{eq:defz}, we construct a nonsmooth continuous function
\[
f(x) = \min\{ 2 \|x-z_1\|, 2 \|x-z_3\|, 2 \|x-z_5\|, 1\}-  \min\{  2 \|x-z_2\|,  2 \|x-z_4\|, 2 \|x-z_6\|, 1\},
\]
shown in Fig.~\ref{fig:nonsmoothbumpy} on the left.
\begin{figure}[ht]{\centering
\includegraphics[width=0.4\textwidth]{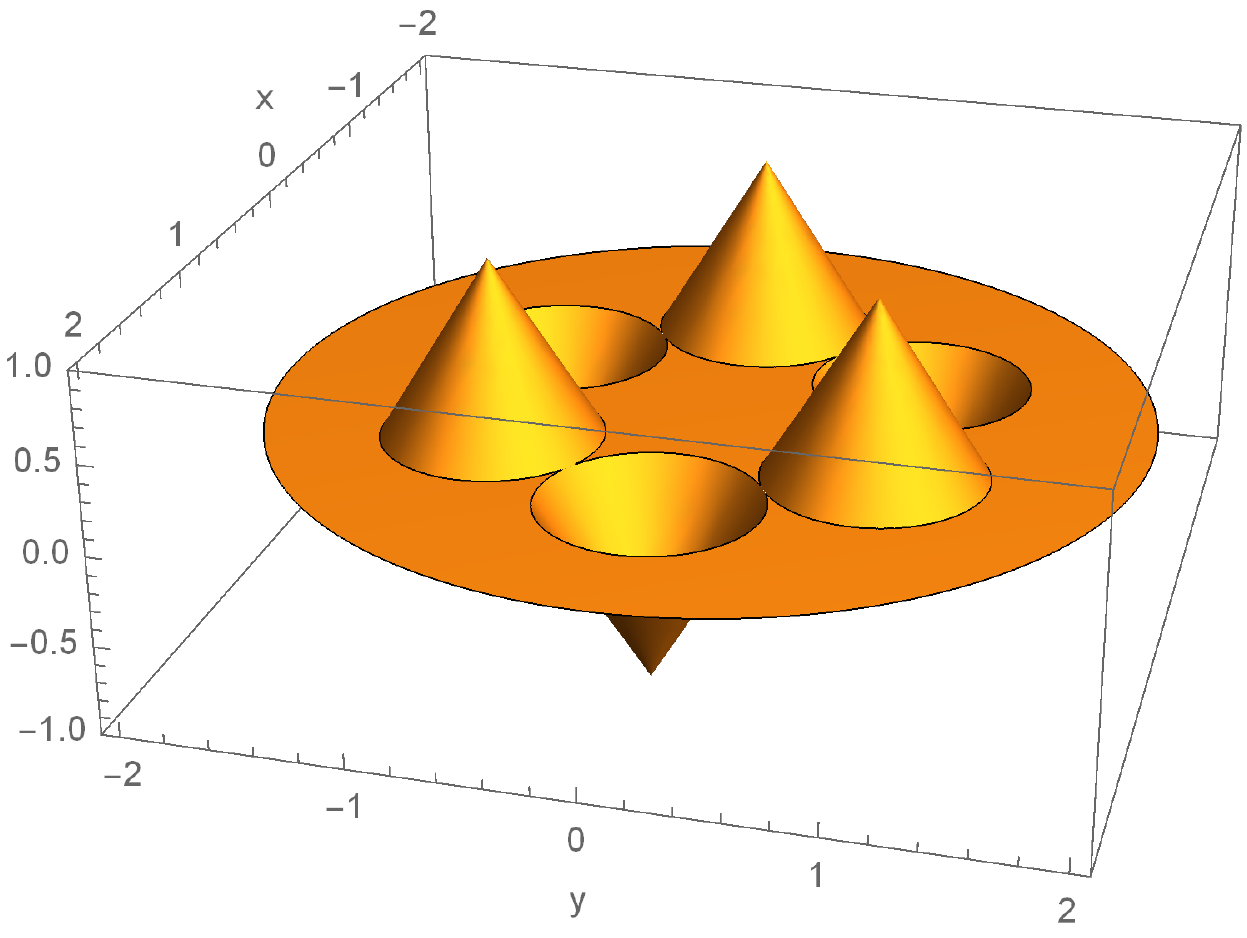}\qquad 
\includegraphics[width=0.4\textwidth]{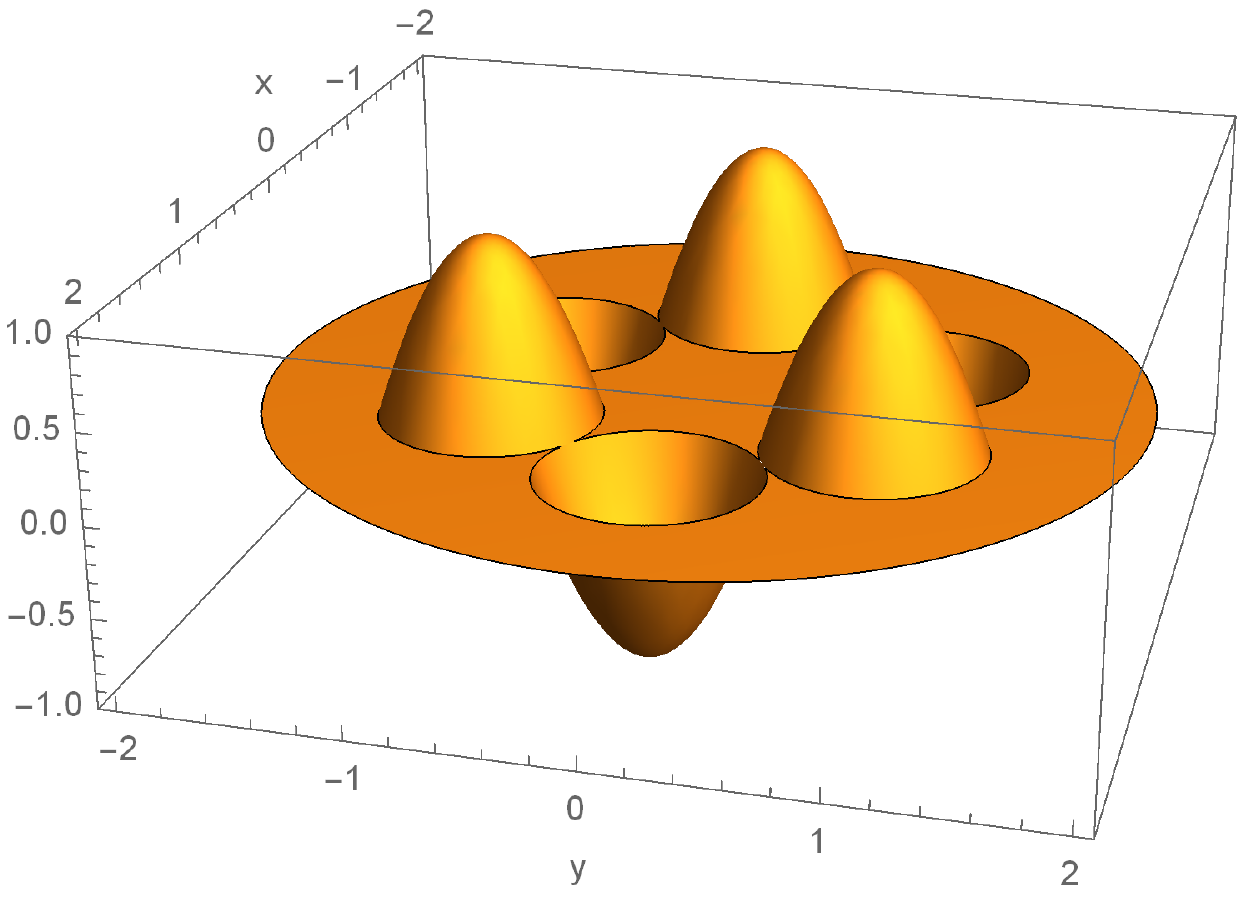}\\}
\caption{The functions $f$ and $h$ in Example~\ref{eg:bumps}.}
\label{fig:nonsmoothbumpy}
\end{figure}
The function $g(x,y) = 0$ is an optimal solution to the quadratic approximation problem for the function $f$ on $X = \{x\,|\, \|x\|\leq 2\}$ (since this is exactly the same pattern of points of minimal and maximal deviation as discussed in one of the two cases in Example~\ref{eg:nonunique}). Moreover, the polynomial 
\[
q_\alpha(x,y) = \alpha (x^2+y^2-1)
\]
is also a best approximation of $f$ for sufficiently small values of $\alpha$ (this may be already evident to the reader from the plot; the mathematically rigorous reasons for this will be laid out in the proof of Lemma~\ref{lem:sharpbump}).

Modifying the `bump' that defines each of the peaks that correspond to the points of minimal and maximal deviation so that the function $f$ smooth around these points, results in the uniqueness of the approximation $q_0$. Indeed, let 
\[
h(x) = \min\{ 4 \|x-z_1\|^2, 4 \|x-z_3\|^2, 4 \|x-z_5\|^2, 1\}-  \min\{  4 \|x-z_2\|^2,  4 \|x-z_4\|^2, 4 \|x-z_6\|^2, 1\},
\]
this function is shown in Fig.~\ref{fig:nonsmoothbumpy} on the right. 

The same constant polynomial $q_0(x,y) = 0$ is optimal for $h$, however, this time the solution is unique: indeed, suppose that another polynomial in $S$ provides a best approximation. This polynomial must be of the form $p_\alpha(x,y) = \alpha (x^2+y^2 -1)$ for some $\alpha \neq 0$. By convexity of the solution set, $p_{\alpha'}$ should also be optimal for any $\alpha'$ between 0 and $\alpha$. 

In the neighbourhood of the point $z_1$ we have $h(x,y) = 4 \|x-z_1\|^2-1 = 4\left[(x-1)^2 + y^2\right]-1$. Then for a sufficiently small $|\alpha'|$
\[
h\left(\frac{4}{4-\alpha'},0\right)-p_{\alpha'}\left(\frac{4}{4-\alpha'},0\right) = -1-\frac{(\alpha')^2}{4-\alpha'}<-1,
\]
hence this is not a solution. 
\end{example}

The next result provides a more general justification for the non-uniqueness of the approximation to a nonsmooth function $f$ that we have just considered. 

\begin{lemma}\label{lem:sharpbump} Let $V$ be as in \eqref{eq:V}, and let $N$ and $P$ be two disjoint compact subsets of $\R^n$ such that they can not be separated strictly by a polynomial in $V$.  Let 
\[
S = \{s\in V\,|\, s(x)\cdot s(y)\leq 0 \, \forall x\in P,y\in N\}. 
\]
There exists a continuous function $f:\R^n\to \R$ such that for any compact $X\in \R^n$ such that $N,P\subseteq X$, the optimal solution set $Q$ to the relevant optimisation problem satisfies $\dim Q = \dim S$, moreover, there exists $q_0\in Q$ such that $\P(q_0) = P$, $\N(q_0) = N$.
\end{lemma}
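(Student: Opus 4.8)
The plan is to realise the prescribed sets $N$ and $P$ as the tips of sharp conical spikes of a continuous function that is flat (identically zero) away from them, so that small separating perturbations of the zero polynomial remain optimal. Writing $d(x,A)=\inf_{a\in A}\|x-a\|$ for the distance to a set, I would set
\[
f(x)=\min\{c\,d(x,N),1\}-\min\{c\,d(x,P),1\},
\]
where the slope $c>0$ is chosen large enough that $c\,d(N,P)\geq 2$; this is possible since $N,P$ are disjoint and compact, so $d(N,P)>0$. With this choice the $1/c$-neighbourhoods of $N$ and of $P$ are disjoint, $f$ is continuous, $\|f\|_\infty=1$, and $f$ takes the value $-1$ exactly on $N$ and $+1$ exactly on $P$. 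Consequently, for $q_0\equiv 0$ one has $\N(q_0)=N$ and $\P(q_0)=P$, and since by hypothesis $N$ and $P$ cannot be strictly separated by a polynomial in $V$, Theorem~\ref{thm:classicsep} gives $q_0\in Q$. This already yields the ``moreover'' clause; note that $f$ does not depend on $X$, so these facts hold for every compact $X\supseteq N\cup P$.

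For the upper bound $\dim Q\leq\dim S$ I would argue directly from the extreme values of $f$. The optimal value is $\|f\|_\infty=1$, so every $p\in Q$ satisfies $|f(x)-p(x)|\leq 1$ on $X$. Evaluating this at $y\in N$, where $f(y)=-1$, forces $p(y)\leq 0$; evaluating it at $x\in P$, where $f(x)=1$, forces $p(x)\geq 0$. Hence $p(x)\cdot p(y)\leq 0$ for all $x\in P,y\in N$, i.e. $p\in S$, and in fact $p$ lies in the sign-consistent subcone $S_0=\{s\in V: s\leq 0 \text{ on } N,\ s\geq 0 \text{ on } P\}$. Since $q_0=0\in Q\cap S$, this gives $\aff Q\subseteq\lspan S$ and therefore $\dim Q\leq\dim S$, recovering the bound of Lemma~\ref{lem:dim}.

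The heart of the proof is the matching lower bound: every sufficiently small $s\in S_0$ is itself optimal, so that $Q$ contains a full-dimensional cap of $S_0$. Fix such an $s$; since the optimal value is $1$, it suffices to verify $\|f-s\|_\infty\leq 1$. Away from the two spike neighbourhoods $f\equiv 0$, so the bound holds once $\|s\|_\infty\leq 1$. Inside the spike seated at $N$ one has $f(x)=c\,d(x,N)-1$; writing $y\in N$ for a nearest point of $N$ to $x$, the orientation $s(y)\leq 0$ together with a Lipschitz estimate gives $s(x)\leq s(y)+L\|x-y\|\leq L\,d(x,N)$, where $L$ bounds the gradient of $s$ on a fixed neighbourhood of $N\cup P$. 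If the slope $c$ exceeds $L$, then $s(x)\leq c\,d(x,N)=f(x)+1$, while the reverse inequality $s(x)\geq f(x)-1$ is immediate from $\|s\|_\infty\leq 1$; the spike at $P$ is symmetric, with slope-domination now used for the lower bound. The set of admissible $s$ is a relative neighbourhood of $0$ in the cone $S_0$, whose affine hull is $\lspan S_0=\lspan S$; hence $\dim Q\geq\dim S$, and combined with the upper bound, $\dim Q=\dim S$.

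The step I expect to be the main obstacle is the slope-domination estimate inside the spikes, and in particular making it uniform: I must choose the single slope $c$ \emph{before} knowing $s$, so I first bound the gradients of all unit-norm elements of $S$ over a fixed compact neighbourhood of $N\cup P$ (finite by finite-dimensionality of $V$), then pick $c$ above both this bound and $2/d(N,P)$, and only afterwards restrict to $s$ of small norm. Two further points of care are that the spike neighbourhoods of $N$ and $P$ must be disjoint, so that exactly one of the two $\min$-terms is active near each extreme set, and that the nearest-point argument must be run through the distance function rather than individual points so that it applies to arbitrary (not necessarily finite) compact $N$ and $P$. Finally, the identification $\lspan S_0=\lspan S$ in the dimension count records the fact that $Q$ populates only the orientation-consistent half of the separating cone.
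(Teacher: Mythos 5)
Your construction is, up to cosmetics, the paper's own: the paper takes $f(x)=\max_{u\in P}\varphi_u(x)-\max_{v\in N}\varphi_v(x)$ with $\varphi_u(x)=\max\{1-\tfrac{2}{d}\|x-u\|,0\}$ and $d=\min_{u\in P,v\in N}\|u-v\|$, which is exactly your $\min\{c\,d(x,N),1\}-\min\{c\,d(x,P),1\}$ with $c=2/d$; the verification that $q_0\equiv 0$ is optimal with $\N(q_0)=N$, $\P(q_0)=P$ via Theorem~\ref{thm:classicsep}, and the Lipschitz/slope-domination estimate showing that small sign-consistent perturbations $s$ (with $s\le 0$ on $N$, $s\ge 0$ on $P$) stay optimal, are the same argument. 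The only real difference is bookkeeping: the paper freezes the slope at $2/d$ and scales each direction by $\alpha=\min\{1/M,2/(dL)\}$ with $M,L$ depending on $X$, whereas you enlarge $c$ once against a uniform gradient bound and then shrink $s$; both work, and your direct derivation of $Q\subseteq S_0$ from $|f-p|\le 1$ at the points where $f=\pm 1$ is, if anything, cleaner than invoking Lemma~\ref{lem:dim}.

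The genuine gap is the step you flag but do not prove: the identification $\lspan S_0=\lspan S$. It is false in general. Take $n=1$, $V=\lspan\{1,x\}$, $N=\{-1,1\}$, $P=\{0\}$. There is no strict separator, since $b(b-a)<0$ and $b(b+a)<0$ would sum to $2b^2<0$; the same summation with non-strict inequalities forces $b=0$ in $S$, so $S=\{ax\,|\,a\in\R\}$ has dimension $1$ (the constraint is vacuous once $s(0)=0$). But $S_0=\{ax+b\,|\,b\ge 0,\ b-a\le 0,\ b+a\le 0\}=\{0\}$, and likewise for the opposite orientation: the troublesome elements of $S$ are those vanishing identically on one set while changing sign on the other, and neither they nor their negatives are sign-consistent. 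Worse, your own (correct) upper-bound argument shows that for \emph{any} continuous $f$ and any $q_0\in Q$ with $\N(q_0)=N$, $\P(q_0)=P$ one has $Q\subseteq q_0+S_0$, so in this instance $\dim Q=0<1=\dim S$ no matter how $f$ is chosen --- the gap cannot be patched, because the construction genuinely delivers $\dim Q=\dim\lspan S_0$, not $\dim S$. You should know that the paper's proof elides exactly the same point: it proves optimality of $\alpha p$ only for directions $p\in S$ with $p(N)\le 0$ and $p(P)\ge 0$, and then asserts that ``any set of linearly independent vectors in $S$'' produces a simplex in $Q$, tacitly assuming every such vector can be sign-normalised into $S_0$ (the same silent normalisation appears in the proof of Lemma~\ref{lem:finite}). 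So your attempt reproduces the paper's argument faithfully, including its one genuine flaw; an honest repair is to state the conclusion as $\dim Q=\dim\lspan S_0$, or to add the hypothesis that every $s\in S$ is, up to sign, nonpositive on $N$ and nonnegative on $P$.
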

\begin{proof} Let 
\[
f(x):=\max_{u\in P}\varphi_u(x)-\max_{v\in N}\varphi_v(x),
\]
where 
\[
\varphi_u(x) = \max\left\{1-\frac{2}{d}\|x-u\|,0\right\}, \quad d= \min_{\substack{u\in P\\v\in N}}\|u-v\|.
\]
Fix a compact set $X\subset \R^n$ such that $P\cup N \subseteq X$. First observe that $q_0(x,y)\equiv 0$ is an optimal solution to the Chebyshev approximation problem: the deviation $f-q_0$ coincides with the function $f$, and  we have for all $x\in X$
\begin{align}\label{eq:001}
f(x) & = \max_{u\in P}\varphi_u(x)-\max_{v\in N}\varphi_v(x) \notag\\
& \leq  \max_{u\in P}\varphi_u(x)\notag\\
& =   \max_{u\in P}\max\left\{1-\frac{2}{d}\|x-u\|,0\right\}\notag \\
& \leq    \max_{u\in P}\max\left\{1-\frac{2}{d}\min_{u\in P}\|x-u\|,0\right\}\\
& =   \max\left\{1-\frac{2}{d}\min_{u\in P}\|x-u\|,0\right\}\notag\\
& = 1 -  \frac{2}{d} \min\left\{\min_{u\in P}\|x-u\|,\frac{d}{2}\right\}\leq 1;\notag
\end{align}
likewise 
\begin{align}\label{eq:002}
f(x) & \geq  -1 +  \frac{2}{d} \min\left\{\min_{v\in N}\|x-v\|,\frac{d}{2}\right\}\geq -1\quad \forall x\in X.
\end{align}
Moreover, for $x\in P$ we have $f(x) =1$, for $x\in N$ we have $f(x) = -1$, and it follows from \eqref{eq:001} and \eqref{eq:002} that for $x\notin P\cap U$ we have $-1<f(x)<1$, hence, $N= \N(q_0) $ and $P= \P(q_0)$, so $q_0$ satisfies the very last statement of the lemma. We have assumed that $N$ and $P$ can not be strictly separated by a polynomial in $V$, hence we deduce that $q_0\equiv 0$ is a best Chebyshev approximation of $f$ on $X$.

We will next show that for any direction $p\in S$ such that $p(N)\leq 0$ and $p(P)\geq 0$ there exists a sufficiently small $\alpha>0$ such that $\alpha p$ is another best Chebyshev approximation of $f$ on $X$. Note that this guarantees that for any set of linearly independent vectors in $S$ we can produce a simplex with vertices at zero and at nonzero vectors along these linearly independent vectors. This yields $\dim Q = \dim S$.

Since $p\in S$ is a polynomial, and the set $X$ is compact, $p$ is Lipschitz on $X$ with some constant $L$, and its absolute value is bounded by some $M> 0$ on $X$. Let 
$\alpha: = \min\left\{\frac 1M,\frac{2}{d L}\right\}$, then for $q = \alpha p$ we have
\[
|q(x)| = |\alpha p(x)|\leq \alpha \cdot M \leq 1 \; \forall x\in X, 
\]
\[
|q(x) -q(y)|  = |\alpha p(x) - \alpha p(y)| = \alpha | p(x) -  p(y)|\leq  \alpha L \|x-y\| \leq \frac{2}{d}\|x-y\|\quad \forall x,y \in X;
\]  
\[
q(y)- \frac{2}{d}\|x-y\|  \leq q(x) \leq  q(y)+\frac{2}{d}\|x-y\|\quad \forall x,y \in X;
\]  

From $q(N)\leq 0 $ and $p(P)\geq 0$ we have for all $x\in X$
\[
\frac{2}{d}\min_{y\in P} \|x-y\|  \leq \max_{y\in P} (q(y)- \frac{2}{d}\|x-y\| ) \leq q(x) \leq  \min_{y\in N} (q(y)+\frac{2}{d}\|x-y\|)\leq \frac{2}{d}\min_{y\in N} \|x-y\|.
\]  
Hence,
\[
\max\left \{\frac{2}{d}\min_{y\in P} \|x-y\| ,-1\right\} \leq  q(x) \leq \min\left \{ \frac{2}{d}\min_{y\in N} \|x-y\|,1\right\}.
\]

We hence have for every $x\in X$ 
\[
- 1 \leq f(x)-q(x)\leq   1 ,
\]
therefore $q$ is a best Chebyshev approximation of $f$ on $X$.
\end{proof}

Finally, we turn our attention to the relation between the uniqueness of best Chebyshev approximation and the geometry of the domain. We show that on finite domains the best approximation is nonunique whenever the dimension of $S$ allows for this (that is, $\dim S>0$ ).

\begin{lemma}\label{lem:finite} If $X\subset \R^n$ is finite, then for any $f:X\to \R$ we have $\dim Q =  \dim S$. 
\end{lemma}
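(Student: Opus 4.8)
The plan is to prove the reverse inequality $\dim Q \ge \dim S$, since $\dim Q \le \dim S$ is already furnished by Lemma~\ref{lem:dim}. I would fix an interior solution $q_0 \in \relint Q$, so that $\N = \N(q_0)$ and $\P = \P(q_0)$ are the essential sets, $S = S(q_0)$, and write $M = \|f - q_0\|_\infty$ for the optimal error. The goal is to place inside $Q$ a simplex of dimension $\dim S$ with apex $q_0$, in the same spirit as the simplex construction used in the proof of Lemma~\ref{lem:sharpbump}.

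First I would exploit finiteness to obtain a uniform margin. Because $X$ is finite and every $x \in X \setminus (\N \cup \P)$ satisfies $|f(x) - q_0(x)| < M$, there is a $\delta > 0$ with $|f(x) - q_0(x)| \le M - \delta$ on $X \setminus (\N \cup \P)$. Next, recall that $S$ is invariant under $s \mapsto -s$, so I may work with separators of the definite sign $s \ge 0$ on $\P$, $s \le 0$ on $\N$, which span the same space as $S$. For such an $s$ I claim $q_0 + \alpha s \in Q$ once $\alpha > 0$ is small. This is a pointwise estimate on the perturbed deviation $f(x) - q_0(x) - \alpha s(x)$: on $\P$ its value is $M - \alpha s(x) \in [-M, M]$ and on $\N$ it is $-M - \alpha s(x) \in [-M, M]$ for all small $\alpha > 0$ thanks to the sign conditions, while on the finitely many remaining points the bound $|f(x) - q_0(x)| \le M - \delta$ absorbs the term $\alpha s(x)$ as soon as $\alpha \le \delta / \max_X |s|$. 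Hence the supremum norm stays at $M$ and optimality is preserved.

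To finish, I would choose $k = \dim S$ linearly independent separators $s_1, \dots, s_k$ of the above definite sign (a convex cone contains a basis of its linear span), pick a single $\alpha > 0$ small enough that $q_0 + \alpha s_i \in Q$ for every $i$, and use convexity of $Q$ to conclude that the nondegenerate simplex $\conv\{q_0, q_0 + \alpha s_1, \dots, q_0 + \alpha s_k\}$ is contained in $Q$. This gives $\dim Q \ge k = \dim S$, and together with Lemma~\ref{lem:dim} yields $\dim Q = \dim S$.

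The substantive point is locating precisely where finiteness is used: it enters solely through the existence of the margin $\delta$. On an infinite domain this margin can vanish --- the deviation may tend to $M$ along points accumulating towards the essential set --- and then a perturbation $q_0 + \alpha s$ can raise the deviation above $M$ for every $\alpha > 0$, which is exactly the mechanism producing the strict inequality $\dim Q < \dim S$ in Example~\ref{eg:unique-and-not}. Thus the heart of the argument is not a hard computation but the structural observation that on a finite domain the strict pointwise inequalities collapse into a single uniform gap, after which the sign bookkeeping on $\N$ and $\P$ is routine.
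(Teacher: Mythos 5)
Your proposal is correct and takes essentially the same route as the paper's own proof: finiteness of $X$ yields the uniform margin (your $\delta$ is the paper's $\alpha$), the perturbed solution $q_0+\alpha s$ along a sign-normalised separator is checked separately on $\P$, on $\N$, and on the remaining points where the margin absorbs $\alpha s$, and a linearly independent family of separators then gives a simplex $\conv\{q_0,q_0+\alpha s_1,\dots,q_0+\alpha s_k\}\subseteq Q$, so $\dim Q\geq \dim S$, which combines with Lemma~\ref{lem:dim} to give equality. Even your sign normalisation (passing to separators with $s\geq 0$ on $\P$ and $s\leq 0$ on $\N$) is precisely the paper's ``otherwise consider $-s$'' reduction, so the two arguments agree in all essentials.
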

\begin{proof} 
If $\dim S=0$, the result follows directly from Corollary~\ref{cor:unique}. For the rest of the proof, assume $\dim S>0$.

Let $q\in \relint Q$, $s\in S$. Then 
$$
s(x)\cdot s(y)\leq 0 \, \forall x\in \P,y\in \N.
$$ 
Let
$$
q_t: = q+ t s.
$$
Without loss of generality, assume that $s(x)\geq 0$ for $x\in \P$ and $s(x)\leq 0$ for $x\in \N$ (otherwise consider $-s$).

Let 
$$
\alpha:= 
\|f-q\|_{\infty}-\max_{x\in X\setminus (\N\cup \P)} |f(x)-q(x)|,
$$
where we use the standard convention that the maximum over an empty set equals $-\infty$, so $\alpha =+\infty$ in the case when $X=\N\cup \P$. Since $X$ is finite, $\alpha>0$.

Let
$$
\beta:= \max_{x\in X}|s(x)|.
$$

We have for all $t\geq 0$ and $x\in \N$
$$
\|f-q\|_\infty = q(x) - f(x) \geq q(x)-f(x)+ts(x)\geq q(x)-f(x) - t \beta = \|f-q\|_\infty - t \beta;
$$
for $t\geq 0$ and $x\in \P$
$$
\|f-q\|_\infty = f(x) - q(x) \geq f(x)-q(x)-ts(x)\geq f(x)-q(x) - t \beta = \|f-q\|_\infty - t \beta;
$$

For $x\in X\setminus (\N\cap \P)$ and all $t\geq 0$
$$ 
|f(x)-q_t(x)| \leq | f(x) -q(x)| + t|s(x)|\leq \|f-q\|_\infty-\alpha + t\beta. 
$$
Note that  $\alpha=+\infty$ only for the case when $X=\N\cap \P$. 

Therefore, for $t$ such that $t \beta \leq\min\{\alpha,\|f-q\|_\infty\}$ we have
$$
\|q_t -f\|_\infty \leq  \|f-q\|_\infty,
$$
and hence $q_t\in Q$ for some positive $t$.

It remains to pick a maximal linearly independent system $\{s_1,s_2,\dots, s_d\}\subset S$, and observe that $\conv\{q,q+t_1 s_1,\dots, q+ t_d s_d\}\subseteq Q$ for some nonzero $t_1,\dots, t_d$. Therefore, $\dim Q \geq \dim S$. By Lemma~\ref{lem:dim} the converse is true, and we are done.
\end{proof}

%
%
%
%
%
%
%

It follows from the previous lemma that the uniqueness of solutions depends not only on the function itself, but also on the domain of its definition. In particular, it may happen that a function defined on a continuous domain has a unique best approximation, but a discretisation of this domain would lead to nonuniqueness of best  approximation. This observation is crucial, since most numerical methods do require a certain level of discretisation.  In this case there is a potential danger of finding an optimal solution to the discretised problem, while it is not relevant to the original one.


\section{Conclusions}\label{sec:concl}

We have identified and discussed in detail key structural properties pertaining to the solution set of the multivariate Chebyshev approximation problem. We have clarified the relations between the points of maximal and minimal deviation   for different optimal solutions, related the set of optimal solutions to the set of separating polynomials, and elucidated the relations between the geometry of the domain and smoothness of the function and uniqueness of the solutions.

However many questions remain unanswered, some of them pertinent to the potential algorithmic solutions, and more remains to be done to fully understand the relation between the uniqueness of the solutions and structure of the problem. Namely, the following questions are of paramount importance.

\begin{enumerate}

\item Can we refine Mairhuber's theorem for the case of multivariate Chebyshev approximation by polynomials of degree at most $d$? Example~\ref{eg:nonunique} indicates that to have a unique approximation of any continuous function on a given domain by a system of multivariate polynomials, it may not be enough to restrict the domain to a set homeomorphic to a subset of a circle. Perhaps a more algebraic condition would work, for instance, restricting the domain to sets with one-dimensional Zariski closure.

\item What are the sufficient conditions for the uniqueness of the best Chebyshev approximation in terms of the function $f$ only? Can we guarantee that for a given set of points of maximal and minimal deviation there exists a domain $X$ that contains them and a function $f$ for which an optimal solution is unique and has specifically this distribution of points of minimal and maximal deviation?

\item Can we bridge the gap between Lemmas~\ref{lem:dim} and~\ref{lem:finite} and show that given a distribution of points of minimal and maximal deviation, for any $d\in \{0,\dots, \dim S\}$ there exists a function $f$ and domain $X$ with $\dim Q = d$? This question is closely related to our discussion at the end of Example~\ref{eg:unique-and-not}, where smoothness appears to be important only with relation to the orthogonal direction to the varieties separating the points of maximal and minimal deviation.

\end{enumerate}

\section*{Acknowledgements}

We are grateful to the Australian Research Council for supporting this work via Discovery Project DP180100602.

\bibliographystyle{plain}
\bibliography{refs}

\section*{appendix}

\subsection{Technical computations for Example~\ref{eg:nonunique}}  Consider the polynomial $q_\alpha(x,y)  =  3 \alpha (x^2+y^2 - 1)+1$, $\alpha\in [0,1]$, of which the polynomials $q_0$ and $q_1$ are special cases. Explicitly our deviation $d_\alpha(x,y) = f(x,y) - q_\alpha(x,y)$ has the form 
\[
d_\alpha(x,y) = x^6 +y^6+  3 x^4 y^2 + 3 x^2 y^4 + 6 x y^2 - 2 x^3-1- 3\alpha(x^2+y^2-1).
\]
The points of maximal and minimal deviation are the global extrema of $d_\alpha$ on the unit disk. To obtain all such extrema, we first find the global minima and maxima of $d_\alpha$ on the boundary of the disk, using the method of Lagrange multipliers, and then study the behaviour of $d_\alpha$ on the interior of the disk.

Our Lagrangian function is $L_\alpha(x,y,\lambda) = d_\alpha(x,y) + 6\lambda (x^2+y^2-1)$ (where we have multiplied the constraint by 6 for convenience), and the necessary condition for the constrained global stationary points on the unit circle is
\begin{align*}
\nabla L_\alpha(x,y,\lambda) 
 &  = 6\begin{bmatrix} ( y^2 -x^2)+    x [(x^2+  y^2)^2-\alpha+ 2\lambda]\\
  2 xy  +  y[(x^2+  y^2)^2-\alpha+ 2\lambda]
  \\
   x^2+y^2 -1 
  \end{bmatrix} = 0_3.
\end{align*}

Multiplying the first line by $y$, and the second line by $x$, and subtracting, we obtain the consequence of the first two equations in the Lagrangian system: $y ( y^2 - 3 x^2) = 0$. Together with the constraint $x^2= 1-y^2$ this yields six candidates for the stationary points on the boundary,  
\[
z_1 =  (1,0),\; z_2 = \left(\frac{1}{2},\frac{\sqrt{3}}{2}\right),\;  z_3 = \left(-\frac{1}{2},\frac{\sqrt{3}}{2}\right),
\]
\[
 z_4 = (-1,0), \; z_5 =  \left(-\frac{1}{2},-\frac{\sqrt{3}}{2}\right),\; z_6 = \left(\frac{1}{2},-\frac{\sqrt{3}}{2}\right).
\]
It is not difficult to check that 
\[
d_\alpha (z_1) = d_\alpha(z_3) = d_\alpha(z_5) = -2, \quad 
d_\alpha (z_2) = d_\alpha(z_4) = d_\alpha(z_6) = 2. 
\]
Note that these values do not depend on $\alpha$. 

It remains to study the behaviour of the deviation $d_\alpha$ on the interior of the disk. If $d_\alpha$ attains a global minimum or maximum in an interior point of the disk, then such extrema must satisfy the unconstrained optimality condition $\nabla d_\alpha(x,y) = 0_2$. We have explicitly 
\begin{align*}
\nabla d_\alpha(x,y) &=  6\begin{bmatrix} ( y^2 -x^2)+    x [(x^2+  y^2)^2-\alpha]\\
  2 xy  +  y[(x^2+  y^2)^2-\alpha]
  \end{bmatrix} = 0_2.
\end{align*}
As before, premultiplying the equations by $y$ and $x$ and subtracting, we conclude that any stationary point must satisfy the equality  $y ( y^2 - 3 x^2) = 0$. Hence any maximum or minimum must lie on one of the lines 
\[
y=0, \quad y = - \sqrt{3} x, \quad y = \sqrt{3}x.
\]
Observe that both our polynomial and the constraint are symmetric with respect to the rotation of the plane by $2\pi/3$, the restrictions of the polynomial $d_\alpha$ to each of those lines are identical (under the relevant rotations), hence it is sufficient for us to study the behaviour of the restriction of  $d_\alpha$ to the open line segment $(-1,1)\times\{0\}$. For convenience, we let
\[
\varphi_\alpha(x) : = d_\alpha(x,y) = x^6- 2 x^3 - 1 - 3 \alpha (x^2 -1).
\]
Observe that
\[
\varphi'_\alpha(x)  = 6 x^5- 6 x^2 - 6 \alpha x = 6x(x(x^3 - 1) -1)<0 \quad \forall x\in (0,1),
\]
hence $\varphi_\alpha(x)$ is strictly decreasing on $(0,1)$, and can only have  minima or maxima on the endpoints of  $[0,1]$.
For the open line segment $(-1,0)$ and $\alpha \in [0,1]$ we have 
\[
\varphi_\alpha(x) 
= (x^3 -1)^2 + 3\alpha (1-x^2)-2 < 1+ 3 - 2 = 2\quad \forall  x\in (-1,0);  
\]
likewise
\[
\varphi_\alpha(x)  =  (x^3 -1)^2 + 3\alpha (1-x^2)-2 > 0 + 0 -2  = -2 \quad \forall  x\in (-1,0).  
\]
Since $\varphi(1) = -2$, and $\varphi(1) = 2$, this means that no global minimum or maximum can be achieved on $(-1,1)$. We are hence left with the only candidate $x=0$, for which we have 
\[
\varphi_\alpha(0) = -1 + 3\alpha \in [-1,2) \quad \text{for } \alpha \in [0,1),
\]
and $\varphi_1(0) = -2$. 
This yields the distribution of points of minimal and maximal deviation of $f$ from $q_0$ and $q_1$ as described in Example~\ref{eg:nonunique}.

\subsection{Computations for Example~\ref{eg:unique-and-not} }

To find the points of maximal and minimal deviation of $f(x,y) = (x^2 - \frac{1}{2})(1 - y^2)$ from the constant polynomial $q_0(x,y)\equiv 0$ on the square $[-1,1]\times [-1,1]$, observe that the optimality condition on the interior of the square gives
\[
\nabla f(x,y) = \begin{pmatrix}
2 x (1-y^2)\\
 - y\left(x^2-\frac{1}{2}\right) 
 \end{pmatrix} = 0_2,
 \] and  out of the five solutions to $\nabla f(x,y) = 0$ 
\[
(0,0), \; \left(-\frac{1}{\sqrt 2},-1\right), \; \left(\frac{1}{\sqrt 2},-1\right),\; \left(-\frac{1}{\sqrt 2},1\right),\; \left(\frac{1}{\sqrt 2},1\right)
\] 
only $(0,0)$ is in the interior of the square. Hence we have only one stationary point $(0,0)$ within the interior of the square, with deviation $d_0 (0,0) = f(0,0)- q_0(0,0) = f(0,0) = -\frac{1}{2}$. 

We now study the boundary of the square: restricting to $x=\pm 1$, and $y\in [-1,1]$, we have the function $\frac{1}{2}(1-y^2)$, which attains minima at the endpoints of the sides of the square, at $(\pm 1,\pm 1)$ with deviation $d_0(\pm 1,\pm 1) = 0$,  and maxima at $(\pm 1,0)$, with the value $d_0(\pm 1,0) = f(\pm 1,0) = \frac{1}{2}$. For $y=\pm 1$ the function is identically zero. We conclude that the points of maximal and minimal deviation of $f$ from zero, on the square $X = [-1,1]\times [-1,1]$, are 
\[
\P(q) = \{(0,0)\}, \qquad \N(q) = \{(-1,0), (1,0)\}.
\]

We next study the deviation of the function $h(x,y) = (x^2 - \frac{1}{2})(1 - |y|)$ from polynomials $q_\alpha(x,y) = \alpha y$ for $\alpha \in \left[-\frac{1}{2}, \frac{1}{2}\right]$. First of all, observe that for $y =0$ we have 
\[
d_\alpha(x,y) = h(x,y) - q_\alpha(x,y) =x^2 - \frac{1}{2},
\]
and hence $d_\alpha(x,0)$ is minimal at $(0,0)$ with the value $d_\alpha(0,0) = -\frac{1}{2}$, and maximal at $(\pm 1,0)$ with the value $d_\alpha(\pm 1,0)= \frac{1}{2}$, independent on $\alpha$. 

For $y>0$ we have $d_\alpha(x,y) = h(x,y) - q_\alpha(x,y)  = (x^2 - \frac{1}{2})(1 - y)- \alpha y $, hence the unconstrained optimality condition gives
\[
\nabla d_\alpha(x,y) =  \begin{pmatrix}
2 x (1-y)\\
 - x^2 +\frac 12 - \alpha 
 \end{pmatrix} = 0_2,
 \] and the only case when we have solutions in the intersection of the interior of the square and $y>0$ is when $\alpha = \frac 1 2$; likewise,  $\nabla d_\alpha(x,y)=0$ gives no solutions in the interior of the square intersected with $y<0$ except for $\alpha = -\frac{1}{2}$. In both cases we have 
\[
d_{\frac 1 2}(0,y) = -\frac 1 2 (1-y) - \frac{1}{2} y= - \frac{1}{2}, \quad y >0; 
\]
\[
d_{-\frac 1 2}(0,y) = -\frac 1 2 (1+y) + \frac{1}{2} y= - \frac{1}{2}, \quad y <0. 
\] 

For the sides of the square that correspond to  $x=\pm 1$, and $y\in [-1,1]$, we have a piecewise linear function
\[
d_\alpha(\pm 1,y) =  \frac{1}{2}(1 - |y|)- \alpha y = 
\begin{cases}
\frac{1}{2}- (\alpha + \frac 1 2)  y, & y\geq 0,\\ 
\frac{1}{2}- (\alpha - \frac 1 2)  y, & y< 0, 
\end{cases}
\]
hence its behaviour is completely determined by the endpoints of the relevant segments: $(\pm 1,\pm 1)$, $(\pm 1,0)$. We have 
\begin{equation}\label{eq:additional}
d_\alpha(\pm 1,-1) = \alpha;  \quad d_\alpha(\pm 1,1) = -\alpha;\quad d_\alpha(\pm 1,0) = \frac{1}{2}.
\end{equation}
For the remaining case of the interior of the sides, $(-1,1)\times \{\pm 1\}$ we have 
\begin{equation}\label{eq:endpoints}
d_\alpha(x,-1) = \alpha; \quad d_\alpha(x,1) = - \alpha. 
\end{equation}

Observe that for $\alpha =0$ the only points of maximal and minimal deviation lie on the line $y=0$, and hence the polynomial $q_0(x,y) = 0$ is a best approximation of the function $h$ on the square $X$.  Also note that for $|\alpha|> \frac 1 2$ the relations \eqref{eq:endpoints} give worse values of minimal and maximal deviation, hence, $q_\alpha$ can not be a best approximation for $|\alpha |>\frac 1 2 $. For $|\alpha| \in (0,\frac 1 2 ) $ we observe that there are no additional points of minimal and maximal deviation on top of the three alternating points on $y=0$ that are present for $\alpha = 0$. It remains to consider the values $|\alpha| = \frac 1 2 $. 

For $\alpha = -\frac 1 2$ we have from \eqref{eq:additional} and the piecewise linear observation 
\[
d_{-\frac 1 2}(\pm 1,y) = \frac{1}{2} \qquad \forall y \in [0,1],
\] 
and \eqref{eq:endpoints} gives
\[
d_{-\frac 1 2}(x,-1) = -\frac 1 2; \quad d_{- \frac 1 2 }(x,1) = \frac 1 2 . 
\]
Likewise, for $\alpha = \frac 1 2 $ we obtain 
\[
d_{\frac 1 2}(\pm 1,y) = \frac{1}{2} \qquad \forall y \in [-1,0],
\qquad 
d_{\frac 1 2}(x,-1) = \frac 1 2; \quad d_{\frac 1 2 }(x,1) = -\frac 1 2 . 
\]
 	
\end{document}